\newtheorem{theorem}{Theorem}
\newtheorem{corollary}[theorem]{Corollary}
\newtheorem{conjecture}[theorem]{Conjecture}
\newtheorem{lemma}[theorem]{Lemma}
\theoremstyle{definition}
\newtheorem{definition}[theorem]{Definition}
\newtheorem{problem}[theorem]{Problem}
\theoremstyle{remark}
\newtheorem{remark}[theorem]{Remark}
\newcommand\doi[1]{\href{http://dx.doi.org/#1}{\texttt{doi:#1}}}
\tikzstyle{lattice} = [draw=red, fill=red]
\tikzstyle{valid} = [draw=red, thick, fill=white]
\tikzstyle{intersect} = [draw=orange, fill=orange]
\tikzstyle{boundary} = [draw=blue, fill=blue]
\tikzstyle{triangle} = [draw=black, thick, fill=blue!20]
\tikzstyle{inequality} = [draw=green, thick]
\tikzstyle{someline} = [draw=black, dashed]
\title[Positroids, Dressian and Stable polynomials]{Positroids, Dressian and Stable polynomials}
\author{Ayush Kumar Tewari}
\address[A.~K.~Tewari]{
}
\email{tewari@math.tu-berlin.de}
\subjclass{12D10, 52B40}
\keywords{positroids, stable polynomails, Rayleigh matroids}
\thanks{I would like to thank Peter Br\"andén and Michael Joswig for going through earlier drafts of this article and for providing valuable suggestions and comments. I would also like to thank Kevin Purbhoo for discussing his work and clearing any queries I had about them.}
\pgfplotsset{compat=1.18}
\begin{document}

\begin{abstract}
Our work is motivated by the connection established between Lorentzian polynomials and the Dressian in the seminal work of Br\"and\'en and Huh on Lorentzian polynomials. We analyze this relation for the class of positroids, and are able to show that in this case, we can relate a multiaffine homogenous stable polynomial to it. Additionally, we also highlight that a conjecture for matroids posed by Br\"and\'en and Huh is true when considered over the class of Rayleigh matroids which strictly contain the class of positroids. We collect these findings along with other results for further exploration.  
\end{abstract}

\maketitle

{
\hypersetup{linkcolor=blue}
\tableofcontents
}

\section{Introduction}

The combinatorics associated with electrical networks carries a lot of really intricate details related to matroids, an initial documentation of which can be found in \cite{recski1989matroid}. In subsequent studies \cite{choe2004homogeneous} the relation between matroids and the geometry of polynomials is described. In recent work \cite{branden2020lorentzian} with the introduction of the notion of \emph{Lorentzian polynomials} it has been demonstrated that highly intrinsic properties concerning matroids can be obtained via findings that are based on the geometry of polynomials. For example, the well-known Mason conjecture for matroids can be resolved by methods developed in \cite{branden2020lorentzian, huh2021correlation}. These results are also extended to show negative dependence properties for the $q$-state Potts model partition function \cite{branden2018hodge}.

In \cite{choe2004homogeneous} the authors are interested in studying the \emph{half-plane property} for multivariate polynomials. For matroids, this is equivalent to the case when the basis generating polynomial is \emph{stable}.  The authors in \cite{choe2004homogeneous} also discuss the various examples of matroids that satisfy the half-plane matroids: uniform matroids, sixth-root of unity matroids, and all matroids of rank and corank two satisfy the half-plane property. Moreover, in recent work \cite{kummer2022matroids} the authors provide a complete classification of which matroids on at most $8$ elements have the half-plane property.  These investigations have also now forayed into asking slightly refined formulations, for example, the \emph{Rayleigh} property for polynomials which captures negative dependence correlation between elements of the bases in a matroid. In \cite{choe2006rayleigh} the authors introduce the notion of \emph{Rayleigh} matroid, which is the class of matroids whose basis generating polynomials is Rayleigh. They also provide examples of matroids which are Rayleigh: regular matroids and all matroids of rank three are Rayleigh.

In this paper, our emphasis is on the class of \emph{positroids}, which in essence captures positivity in the class of matroids. In one of the first results, we build on the work done in \cite{purbhoo2018total} in which they show that a certain multi affine homogenous polynomial over $\mathbb{C}$ which \emph{represents} a point in the Grassmannian is stable if and only if this point represents a nonegative point which corresponds to a positroid. We extend this result over real and real closed fields, and use this to refine the characterization of points in the Dressian of a positroid in the form of this 
result

\begin{theorem}
Let $V$ represent a nonegative point in $Gr(k,n)$ and let $\mathcal{M}$ be a positroid of rank $k$ on $[n]$ elements with bases $\mathcal{B}$ that corresponds to $V$. Let $p_{B}$ represent the nonzero \emph{Pl\"ucker coordinates} of $V$, which also satisfy the quadratic Plücker relations. Then for $\mu \in \text{Trop}\>Gr(\mathcal{M}) \subseteq  Dr(\mathcal{M})$, the multi affine homogenous stable polynomial $f$ that \emph{represents} $V$,
\[   f =    \sum_{B \in \mathcal{B}} p_{B} \> x^{B} \]
in $\mathbb{K}[x_{1} , \hdots , x_{n}]$ is the one whose tropicalization is $\mu$.
\end{theorem}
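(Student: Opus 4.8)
The plan is to read the statement as a reconciliation of the two objects attached to $V$: the stable multiaffine polynomial $f$ produced by the real closed field version of Purbhoo's theorem, and the tropical Pl\"ucker vector $\mu$. Both are assembled directly out of the Pl\"ucker coordinates $p_B$, so once the coefficient field is pinned down, the identification $\text{trop}(f)=\mu$ should become an unwinding of definitions rather than a substantive computation.

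First I would fix $\mathbb{K}$ to be a real closed field equipped with a nontrivial valuation $\text{val}\colon\mathbb{K}^\times\to\mathbb{R}$ compatible with the order (a field of real Puiseux series, or whichever real closed valued field was used to extend Purbhoo's result earlier). Since $\mathcal{M}$ is a positroid and $\mu\in\text{Trop}\,Gr(\mathcal{M})$, that extension supplies a nonnegative point $V\in Gr(k,n)(\mathbb{K})$ whose Pl\"ucker coordinates $p_B$ are nonzero precisely on $B\in\mathcal{B}$, satisfy the quadratic Pl\"ucker relations, and obey $\text{val}(p_B)=\mu_B$ for all $B\in\mathcal{B}$; by the same theorem the representing polynomial $f=\sum_{B\in\mathcal{B}}p_B\,x^B$ is stable, so $f$ has exactly the form and the stability asserted in the statement.

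Next I would compute $\text{trop}(f)$ outright. By definition it is the tropical polynomial $\bigoplus_{B\in\mathcal{B}}\text{val}(p_B)\odot x^B$, i.e.\ the piecewise-linear function $x\mapsto\min_{B\in\mathcal{B}}\bigl(\text{val}(p_B)+\sum_{i\in B}x_i\bigr)$ once the sign convention has been fixed. Its coefficient vector, indexed by the monomials $x^B$ with missing monomials read as $+\infty$, is precisely $(\text{val}(p_B))_{B}$. Because the $p_B$ satisfy the quadratic Pl\"ucker relations, this vector lies in $Dr(\mathcal{M})$, and being realized by $V$ it lies in $\text{Trop}\,Gr(\mathcal{M})$; by construction it equals $\mu$, so $\text{trop}(f)=\mu$. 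For the uniqueness implicit in ``the one'': any two multiaffine homogeneous polynomials representing $V$ differ by a global scalar $c\in\mathbb{K}^\times$, which shifts $\text{trop}(f)$ by $\text{val}(c)\mathbf{1}$ and hence leaves unchanged the point of the tropical projective space in which $\mu$ lives, so $f$ is the unique such polynomial with tropicalization $\mu$.

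I expect the only genuine obstacle to be the bookkeeping at the interface of the order and the valuation on $\mathbb{K}$: one must be sure that the real closed field extension of Purbhoo's theorem really does yield a \emph{nonnegative} realization with the \emph{prescribed} valuations on the Pl\"ucker coordinates and with support exactly $\mathcal{B}$, rather than merely some realization over $\mathbb{C}$ carrying an abstract valuation. Granting that lemma --- the technical content established earlier --- the present theorem reduces to the definition of the tropicalization of a polynomial together with the standard fact that the coefficientwise valuation of a Pl\"ucker vector satisfying the quadratic relations is a point of the tropical Dressian. The one point to keep honest is the $\min$ versus $\max$ (equivalently $\text{val}$ versus $-\text{val}$) convention, so that $\text{trop}(f)$ and $\mu$ are compared inside the same tropical Grassmannian.
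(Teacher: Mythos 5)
Your second and third paragraphs (tropicalizing $f$ coefficientwise, observing that the valuations of a Pl\"ucker vector satisfying the quadratic relations give a point of $\text{Trop}\,Gr \subseteq Dr(\mathcal{M})$, and the up-to-scalar remark) match what the paper actually does. The genuine gap is in your first step, where you reverse the logical direction: you take $\mu \in \text{Trop}\,Gr(\mathcal{M})$ as the given datum and assert that ``the extension of Purbhoo's theorem'' supplies a totally nonnegative point $V \in Gr(k,n)(\mathbb{K})$ with $\text{val}(p_B)=\mu_B$ and support exactly $\mathcal{B}$. Corollary \ref{cor:real_stable} provides no such lifting: it is purely an equivalence ``$f$ representing a \emph{given} $V$ is stable iff $V$ is totally nonnegative'' and says nothing about valuations or about realizing a prescribed tropical point. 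The lemma you are ``granting'' is not established earlier in the paper, and in the generality you invoke it is false: realizability of $\mu$ by a \emph{nonnegative} point is the content of the positive tropicalization results (e.g.\ Theorem \ref{thm:posDres_equal_postropGrass}, $\text{Trop}^{+}\Pi_{\mathcal{M}} = \text{val}(\Tilde{\Pi}_{\mathcal{M}}(\mathcal{R}_{\geq 0})) = Dr^{+}(\mathcal{M})$), and it holds only for points of the \emph{positive} part. Already for $\mathcal{U}_{2,4}$ there are points of $\text{Trop}\,Gr(2,4)$ lying outside $\text{Trop}^{+}Gr(2,4)$, so a general $\mu \in \text{Trop}\,Gr(\mathcal{M})$ admits no totally nonnegative lift and your construction of $V$, hence of a stable $f$ with $\text{trop}(f)=\mu$, breaks down at that point.

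The paper avoids this entirely by keeping the direction of the hypothesis: the nonnegative point $V$ is given, $f=\sum_{B\in\mathcal{B}}p_B\,x^B$ is formed from its Pl\"ucker coordinates, stability comes from Corollary \ref{cor:real_stable}, and the coefficientwise valuation of $f$ is then observed to satisfy the tropical Pl\"ucker relations, so it \emph{defines} the point $\mu \in \text{Trop}\,Gr(k,n) \subseteq Dr(\mathcal{M})$ appearing in the statement; no lifting of an arbitrary Dressian point is needed or claimed. If you want to run the argument in your direction, you must restrict $\mu$ to the positive part $Dr^{+}(\mathcal{M}) = \text{Trop}^{+}\Pi_{\mathcal{M}}$ and invoke the realizability theorem of \cite{arkani2021positive} (not Purbhoo's theorem) to produce $V$ over the real Puiseux series with the prescribed valuations; with that substitution your argument, including the min/max bookkeeping and the scalar-ambiguity remark, would be sound, but it proves a differently quantified statement than the one the paper argues.
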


This result is sort of intermediate between the weak half-plane property and strong half-plane property; in the sense that in the case of the strong plane property, the polynomial being considered is fixed and it is the basis generating polynomial of the matroid, whereas, in the case of the weak half-plane property, it guarantees only the existence of a stable polynomial whose support is the bases of the matroid.

Additionally, we also share an observation to highlight that Conjecture 3.12 in \cite{branden2020lorentzian} is true when considered over the class of Rayleigh matroids \cite{choe2006rayleigh}. This follows by the definition used in \cite{choe2006rayleigh}, and as this class of matroids  strictly contains the class of positroids \cite{marcott2016positroids}, we refurbish this result in the following form, 

\begin{corollary}
The following conditions are equivalent for any non-empty $J \subseteq \{0, 1\}^{n}$ :

\begin{enumerate}
    \item $J$ is the set of bases of a Rayleigh matroid on $[n]$.
    \item The generating function $f_{J}$ is a homogeneous $1-$Rayleigh polynomial.
\end{enumerate}
Essentially, Conjecture 3.12 \cite{branden2020lorentzian} is true for the class of Rayleigh matroids.    
\end{corollary}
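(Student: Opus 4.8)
The plan is to check the two implications separately; the whole content is that, once one restricts to Rayleigh matroids, the nontrivial half of Conjecture~3.12 becomes the \emph{defining} property. First I would pin down the definitions. Following \cite{choe2006rayleigh}, a matroid $\mathcal M$ on $[n]$ with basis set $\mathcal B$ is a \emph{Rayleigh matroid} precisely when its basis generating polynomial $f_{\mathcal M}=\sum_{B\in\mathcal B}\prod_{i\in B}x_i$ has nonnegative Rayleigh differences $\partial_i f\cdot\partial_j f-f\cdot\partial_i\partial_j f\ge 0$ on the positive orthant for all $i\ne j$; by the definition in \cite{branden2020lorentzian} this is exactly the statement that $f_{\mathcal M}$ is a $1$-Rayleigh polynomial. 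I would also record two elementary structural facts about an arbitrary nonempty $J\subseteq\{0,1\}^n$: the generating function $f_J$ is automatically multiaffine, and $f_J$ is homogeneous if and only if all members of $J$ have a common cardinality $r$.

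For (1)$\Rightarrow$(2): if $J=\mathcal B$ is the set of bases of a Rayleigh matroid $\mathcal M$, then all bases have the common size $r=\operatorname{rk}\mathcal M$, so $f_J$ is homogeneous; it is multiaffine as noted; and it is $1$-Rayleigh by the very definition of a Rayleigh matroid. This direction is therefore immediate. It is worth emphasizing that it is exactly the ``matroid $\Rightarrow$ $1$-Rayleigh'' half of Conjecture~3.12, which is open for general matroids --- it subsumes the long-standing question of whether every matroid is Rayleigh --- but is tautological once the hypothesis is strengthened to ``Rayleigh matroid''.

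For (2)$\Rightarrow$(1): suppose $f_J$ is a homogeneous $1$-Rayleigh polynomial. Homogeneity forces the members of $J$ to share a common size $r$, and then I would invoke the converse half of Conjecture~3.12 --- that a homogeneous $1$-Rayleigh polynomial has matroidal support --- in the form established in \cite{branden2020lorentzian}; alternatively one argues directly that a violation of the basis exchange axiom for $J$ yields, after clearing common factors, a monomial certifying a strictly negative Rayleigh difference, contradicting the hypothesis. Either way $J$ is the set of bases of a matroid $\mathcal M$, and since $f_{\mathcal M}=f_J$ is $1$-Rayleigh, $\mathcal M$ is Rayleigh by definition. Combining the two implications gives the equivalence, and shows that Conjecture~3.12 holds verbatim with ``matroid'' replaced by ``Rayleigh matroid'' --- a class that strictly contains the positroids by \cite{marcott2016positroids}. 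The only real obstacle is bureaucratic: making sure the converse implication used in (2)$\Rightarrow$(1) is available without already assuming the matroid axioms, and, if the off-the-shelf statement is not quite in that form, carrying out the direct exchange-axiom computation carefully for bases that differ in more than two elements.
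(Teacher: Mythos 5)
Your proposal is correct and follows essentially the same route as the paper: the implication (i)~$\Rightarrow$~(ii) is the definition of a Rayleigh matroid (homogeneity coming from equicardinality of bases), and (ii)~$\Rightarrow$~(i) invokes the Br\"and\'en--Huh result (Theorem~3.10 of \cite{branden2020lorentzian}) that a homogeneous $1$-Rayleigh generating function has matroidal support, after which the matroid is Rayleigh by hypothesis. Your fallback direct exchange-axiom argument is unnecessary, since the cited theorem already covers exactly the form needed.
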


We conclude with an extension of results relating Lorentzian polynomials and Dressians to the case of Flag Dressians and we also list some problems which we aim to work on in the future.

\section{Preliminaries}

We try to introduce the basic notions in this section which we use in our subsequent sections. We refer the reader to \cite{oxley} for details on matroid theory.

A \emph{matroid} $\mathcal{M}$ of rank $k$ on the set $E$ is a nonempty collection $\mathcal{B} \subseteq \binom{E}{k}$ of $k$-element subsets of $E$, called \emph{bases} of $\mathcal{M}$, that satisfies the basis exchange axiom: 
For any $I , J \in \mathcal{B}$ and $a \in I$, there exists $b \in J$ such that $I \setminus \{ a \} \cup \{ b \} \in \mathcal{B}$.

A matroid is called \emph{representable} if it can be represented by columns of a matrix over some field $\mathbb{K}$. We index the columns of a $k \times n$ matrix by the set $[n]$.

The generating function of a matroid $\mathcal{M}$ with bases $\mathcal{B}$ (also referred to as the basis enumeration polynomials in \cite{marcott2016positroids}) is defined as the polynomial

\begin{equation}\label{eq:gen_func_matroid}
     M(x)  = \sum_{B \in \mathcal{B}} x^{B}   
\end{equation}

A slightly general notion of \emph{generating function} of a subset $J \subseteq \mathbb{N}^{n}$, is defined as follows

\[ f_{J}  = \sum_{\alpha \in J} \frac{x^{\alpha}}{\alpha!} \quad \text{where} \quad  \alpha! = \prod_{i=1}^{n} \alpha_{i}! \] 

and we see that these two notions agree when $J \subseteq \{0, 1\}^{n}$ .

The \emph{Grassmannian} $Gr(k,n)$ is the parameterization of the family of all $k$-dimensional subspaces of $n$-dimensional vector space in $\mathbb{K}^{n}$. It enjoys a smooth projective variety structure, corresponding to the vanishing set of the \emph{Pl\"ucker ideal}. 

An element $V$ in the Grassmannian $Gr(k,n)$ is understood as a collection of $n$ vectors $v_{1}, \hdots, v_{n} \in \mathbb{K}^{k}$ spanning the space $\mathbb{K}^{k}$ modulo the simultaneous action of $GL(k,n)$. Let $A$ be a $k \times n$-matrix consisting of column vectors $v_1, v_2, \ldots, v_n$. We call $A$, a \emph{representative} of $V$. This defines a matroid $\mathcal{M}_V$ whose bases are the $k$-subsets $I \subseteq [n]$ such that $\text{det}(A[I]) \neq 0$. It is important to note that this is independent of the choice of $A$, and only depends on $V$. Here, $\text{det}(A[I])$ denotes the determinant of the $k \times k$ submatrix of $A$ with the column set $I \in \binom{[n]}{k}$.

A \emph{positroid} $P$ of rank $k$ is a matroid that can be represented by a $k \times n$-matrix $A$ such that the maximal minor $p_I$ is non-negative for each $I \in \binom{[n]}{k}$.

We now introduce the definition of \emph{Lorentzian} polynomials \cite{branden2020lorentzian}. We mostly rely on \cite{branden2020lorentzian} and \cite{dcamurota} for our definitions. Let $n$ and $d$ be nonnegative integers, and set $[n] = \{1, \hdots , n\}$ and  $H^{d}_{n}$ to be the set of degree $d$ homogeneous polynomials in $\mathbb{R}[w_{1} , \hdots , w_{n}]$. Let $f$ be a polynomial in $\mathbb{R}[w_{1} , \cdots , w_{n}]$,

\[ f(x) = \sum_{\alpha \in \mathbb{N}^{n}} \frac{c_{\alpha}}{\alpha!} x^{\alpha}   \]

The \emph{support} of the polynomial $f$ is the subset of $\mathbb{N}^{n}$ n defined by

\[  \text{supp}(f) = \{\alpha \in \mathbb{N}^{n} \>\> | \>\> c_{\alpha} \neq 0 \} \]

Let $P^{d}_{n} \subset  H^{d}_{n}$ be the open
subset of polynomials all of whose coefficients are positive. The \emph{Hessian} of $f \in \mathbb{R}[w_{1}, \cdots , w_{n}]$ is the symmetric matrix

\[ \mathcal{H}_{f}(w)  = (\partial_{i} \partial_{j}f )^{n}_{i,j=1} \]

where $\partial$ denotes the partial derivative $\frac{\partial}{\partial w_{i}}$. 

\begin{definition}
Set $L^{0}_{n}  = P^{0}_{n}, L^{1}_{n}  = P^{1}_{n}$ , and 

\[ L^{2}_{n} = \{ f \in P^{2}_{n} \> \> | \> \> \mathcal{H}_{f} \> \> \text{is nonsingular and has exactly one positive eigenvalue} \}  \]

For $d\geq 2$, $L^{d}_{n}$ is defined recursively as follows 

\[ L^{d}_{n} = \{ f \in P^{d}_{n} \> \> | \> \> \partial_{i}f \in L^{d-1}_{n} \> \> \text{for all}  \> \> i \in [n] \}  \]

The polynomials in $L^{d}_{n}$ are called \emph{strictly Lorentzian}, and the limits of strictly Lorentzian polynomials are called \emph{Lorentzian}. 
\end{definition}

$L^{d}_{n}$ can also be identified with the set of $n \times n$ symmetric matrices with positive entries that have the \emph{Lorentzian signature} $(+, -, . . . , -)$. 

\begin{definition}
A subset $J \subset \mathbb{N}^{n}$  to be $M$-convex if it satisfies any one of the following equivalent conditions:  

\begin{itemize}
    \item For any $\alpha, \beta \in J$ and any index $i$ satisfying $\alpha_{i} > \beta_{i}$, there is an index $j$ satisfying
    \[ \alpha_{j} < \beta_{j} \quad \text{and} \quad \alpha - e_{i} + e_{j} \in J   \]
    \item For any $\alpha, \beta \in J$ and any index $i$ satisfying $\alpha_{i} > \beta_{i}$, there is an index $j$ satisfying
    \[ \alpha_{j} < \beta_{j} \quad \text{and} \quad \alpha - e_{i} + e_{j} \in J \quad \text{and} \quad \beta - e_{i} + e_{j} \in J  \]

\end{itemize}
\end{definition}

Let $\mu$ be a function from $\mathbb{N}^{n}$ to $\mathbb{R} \cup \{\infty\}$. The effective domain of $\mu$ is, by definition,

\[  \text{dom}(\mu)  = \{ \alpha \in \mathbb{N}^{n} \>\> | \>\>  \mu(\alpha) < \infty  \}  \]

\begin{definition}
A function $\mu$ from $\mathbb{N}^{n}$ to $\mathbb{R} \cup \{\infty\}$ is said to be $M$-convex if it satisfies the \emph{symmetric exchange property}:
\begin{itemize}
    \item For any $\alpha, \beta \in \text{dom}(\mu)$ and any $i$ satisfying $\alpha_{i}  > \beta_{i}$ , there is $j$ satisfying
    \[  \alpha_{j} < \beta_{j} \quad \text{and} \quad \mu(\alpha) + \mu(\beta) \geq \mu(\alpha - e_{i} + e_{j} ) + \mu(\beta - e_{j} + e_{i} ).     \]    
\end{itemize}
\end{definition}

The effective domain of an $M$-convex function on $\mathbb{N}^{n}$ is an $M$-convex subset of $\mathbb{N}^{n}$. When the effective domain of a function $\mu$ is is $M$-convex, the symmetric exchange property for $\mu$ is equivalent to the following \emph{local exchange property},

\begin{itemize}
    \item For any $\alpha, \beta \in \text{dom}(\mu)$ with $|\alpha - \beta|_{1} = 4$, there exist $i$ and $j$ satisfying
    \[  \alpha_{i} > \beta_{i} , \alpha_{j} < \beta_{j} \quad \text{and} \quad \mu(\alpha) + \mu(\beta) \geq  \mu(\alpha - e_{i} + e_{j} ) + \mu(\beta - e_{j} + e_{i} ).   \]
\end{itemize}

A function $\mu : \mathbb{N}^{n} \rightarrow \mathbb{R} \cup \{\infty\}$ is said to be $M$-concave if $-\mu$ is $M$-convex. The effective domain of an M-concave function $\mu$ is,

\[  \text{dom}(\mu)  = \{ \alpha \in \mathbb{N}^{n} \>\> | \>\>  \mu(\alpha) > -\infty  \}  \]

A \emph{valuated matroid} on $[n]$ is an $M$-concave function on $\mathbb{N}^{n}$ whose effective
domain is a nonempty subset of $\{0, 1\}^{n}$ . The effective domain of a valuated matroid $\mu$ on $[n]$ is the set of bases of a matroid on $[n]$, the \emph{underlying matroid} of $\mu$.

Let $M^{d}_{n}(\mathbb{K})$ denote the set of all all degree $d$ homogeneous polynomials in $K_{\geq 0}[w_{1} , \cdots, w_{n}]$ whose support is $M$-convex.

Although, the standard definition of Lorentzian polynomials is over $\mathbb{R}$ in \cite{branden2020lorentzian} itself the authors provide a definition over any field $\mathbb{K}$ as follows,

\begin{definition}
Set $L^{0}_{n}(\mathbb{K}) = M^{0}_{n}(\mathbb{K}), L^{1}_{n}(\mathbb{K}) = M^{1}_{n}(\mathbb{K})$, and    
\[ L^{2}_{n}(\mathbb{K})  = \{ f_{t} \in M^{2}_{n}(\mathbb{K}) \>\> | \>\> \text{The Hessian of} \>\> f_{t} \>\> \text{has at most one eigenvalue in} \>\> \mathbb{K}_{\geq 0} \} \]

For $d \geq 3$, we define $L^{d}_{n}(\mathbb{K})$ as follows
\[   L^{d}_{n}(\mathbb{K})  = \{ f_{t} \in M^{d}_{n}(\mathbb{K}) \>\> | \>\> \partial^{\alpha}f_{t} \in L^{2}_{n}(\mathbb{K})  \>\> \text{for all} \>\> \alpha \in \Delta^{d-2}_{n} \}  \]

where $\Delta^{d-2}_{n}$ represents the discrete $(d-2)-$th simplex.
\end{definition}

We now discuss the background of the definition of stable polynomials and how they are related to the study of Lorentzian polynomials. In earlier works, the definition of stable polynomials is referred to as the \emph{half plane property} \cite{choe2004homogeneous}. Let $\mathcal{H} = \{ x \in \mathbb{C} \>\> | \>\> \text{Re} x > 0 \}$  denote the open upper half plane.

\begin{definition}
A polynomial $f$ in $\mathbb{R}[w_{1} , \cdots , w_{n}]$ is called \emph{stable} if $f$ is nonvanishing on $\mathcal{H}^{n}$ or identically
zero.   

In this case, $f$ is also said to satisfy the \emph{half plane property}.
\end{definition}

\begin{remark}
In \cite{choe2004homogeneous}, the half-plane property is also defined for a matroid. $\mathcal{M}$ is said to satisfy the \emph{half plane property} if the basis generating polynomial of $\mathcal{M}$ satisfies the half-plane property. Additionally, $\mathcal{M}$ is said to satisfy the \emph{weak half plane} property if there exists a polynomial that satisfies the half-plane property and whose support is the basis of $\mathcal{M}$ \cite{choe2004homogeneous}.
\end{remark}

Let $S^{d}_{n}$ be the set of degree $d$ homogeneous stable polynomials in $n$ variables with nonnegative coefficients and by Hurwitz Theorem $S^{d}_{n} \subset H^{d}_{n}$. We refer the reader to \cite{wagner2011multivariate} to explore the rich theory of stable polynomials. Any polynomial $f \in S^{d}_{n}$  is the limit of \emph{strictly stable polynomials} \cite{nuij1968note}.

Lorentzian polynomials are a generalization of stable polynomials, i.e., all stable polynomials are Lorentzian, moreover, any homogeneous stable polynomial is a constant multiple of a Lorentzian polynomial \cite{branden2020lorentzian}.

In \cite{choe2004homogeneous} the authors envisaged that for large classes of matroids, the \emph{basis generating polynomials} are stable. Subsequent studies also concentrate on certain relaxations of this class, the most prominent of them being \emph{Rayleigh} polynomials,

\begin{definition}
Let $c$ be a fixed positive real number, and let $f$ be a polynomial in $\mathbb{R}[w_{1} , \cdots , w_{n}]$. $f$ is  called $c$-\emph{Rayleigh} if $f$ has nonnegative coefficients and  

\[ \partial^{\alpha}f(x) \> \partial^{\alpha+e_{i}+e_{j}}f(x) \leq c \> \partial^{\alpha+e_{i}}f(x) \> \partial^{\alpha+e_{j}}f(x) \quad \text{for all} \>\> i,j \in [n], \alpha \in \mathbb{N}^{n}, x \in \mathbb{R}^{n}_{\geq 0}   \]

When $f$ is multi-affine, that is, when $f$ has degree at most one in
each variable, the $c$-Rayleigh condition for $f$ is equivalent to

\[ f(x) \partial^{i} \partial^{j}f(x) \leq c \>\> \partial^{i}f(x) \partial^{j}f(x) \>\> \text{for all distinct} \>\> i,j \in \mathbb{N}^{n}, \>\> \text{and} \>\> x \in \mathbb{R}^{n}_{\geq 0}  \]
\end{definition}

A multi-affine polynomial $f$ is said to be \emph{strongly Rayleigh} if

\[ f(x) \partial^{i} \partial^{j}f(x) \leq c \>\> \partial^{i}f(x) \partial^{j}f(x) \>\> \text{for all distinct} \>\> i,j \in \mathbb{N}^{n}, \>\> \text{and} \>\> x \in \mathbb{R}^{n}  \]

A multi-affine polynomial is stable if and only if it is strongly Rayleigh \cite[Theorem 5.6]{branden2007polynomials}. Based on the definition of Rayleigh polynomials, the notion of \emph{Rayleigh} matroids \cite{choe2006rayleigh} was introduced, where a matroid $\mathcal{M}$ is a Rayleigh matroid if its basis generating polynomial is Rayleigh. Similarly, there also exists the class of \emph{strongly Rayleigh matroid}, where a matroid is a \emph{strongly Rayleigh matroid} if its basis generating polynomial is strongly Rayleigh. It is straightforward to see from \cite[Theorem 5.6]{branden2007polynomials} that all strongly Rayleigh matroids satisfy the half-plane property.

We now define notions related to tropical geometry that appear in our discussion and we refer the reader to \cite{maclagan2021introduction} for further details. Tropical geometry is the study of vanishing sets of polynomials defined over the \emph{tropical} semiring $\mathbb{T} = \{\mathbb{R} \cup \{-\infty\}, \text{max} = \oplus, + = \odot\}$. A \emph{tropical} polynomial is a polynomial defined over $\mathbb{T}$ with the binary operations replaced by $\oplus$ and $\odot$. $\text{Trop}(f)$ denotes the \emph{tropical hypersurface} associated with $f$ which is the collection of all points where the maxima is achieved at least twice.

The \emph{tropical Grassmannian} $\text{TropGr}(k,n)$ is the intersection of the tropical hypersurfaces $\text{Trop}(f)$, where $f$ ranges over all elements of the \emph{Pl\"ucker ideal} $\mathcal{I}_{k,n}$ which is generated by the \emph{quadratic Pl\"ucker relations}, and therefore it is also a tropical variety \cite{maclagan2021introduction}. The \emph{Dressian} $Dr(k,n)$ is the intersection of the tropical hypersurfaces $\text{Trop}(f)$, where $f$ ranges over all three-term Pl\"ucker relations, that generate the \emph{Pl\"ucker ideal} and hence it possesses the structure of a tropical prevariety \cite{maclagan2021introduction}. The underlying matroid for the definitions of the tropical Grassmannian and Dressian is the \emph{uniform matroid} $\mathcal{U}_{k,n}$. However, the notion of Dressian has been extended to arbitrary matroids with the idea of the \emph{local Dressian}. The \emph{local Dressian} $Dr(M)$ is defined as the tropical pre-variety given by the set of quadrics obtained from the three-term  Pl\"ucker relations by setting the variables $p_{B}$ to zero, where $B$ is not a basis of $M$ \cite{olarte2019local}.

\begin{remark}
We acknowledge that in some instances, all Pl\"ucker relations (not necessarily only the 3-term Pl\"ucker relations) are considered for the definition of the Dressian. However, by the work in \cite{baker2019matroids} we know that these two notions coincide over the tropical hyperfield and hence our definition would remain consistent for our discussion.    
\end{remark}

\section{Dressian, Stable and Lorentzian Polynomials}

Since the introduction of the notion of a Dressian, it has been noted that the points residing in the Dressian enjoy multiple avatars in which they can be treated and satisfy multiple properties. The subject for initial findings was the Dressian $Dr(k,n)$ with the underlying matroid being the uniform matroid $\mathcal{U}_{k,n}$. We collect the various previously known \cite{maclagan2021introduction,speyer2008tropical,branden2020lorentzian,tewari2022generalized} equivalent notions for a point residing in the Dressian $Dr(k,n)$ in the form of Theorem \ref{thm:Dressian}. 

\begin{theorem}\label{thm:Dressian}
Let $\mu$ be a point in $Dr(k,n)$. Then the following are equivalent

\begin{enumerate}
    \item $\mu$ is a valuated matroid with the underlying matroid $\mathcal{U}_{k,n}$.
    \item $\mu$ defines a $M-convex$ function on the matroid $\mathcal{U}_{k,n}$.
    \item $\mu$ satisfies the tropical three-term Pl\"ucker relations.
    \item $\mu$ as a weight vector induces a regular matroidal subdivision of $\Delta(k,n)$.
    \item $\mu = \text{trop}(f_{t})$, where $f_{t}$ is a Lorentzial polynomial defined on the bases set $\mathcal{B}$ of $\mathcal{U}_{k,n}$.
    \item $\mu$ defines a metric for a generalised metric tree arrangement.
\end{enumerate}
\end{theorem}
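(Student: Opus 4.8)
The plan is to organize the six-fold equivalence around a single pivot, condition (iii) — the tropical three-term Pl\"ucker relations — and to prove each of the other five conditions equivalent to (iii), since every such equivalence is already available in the literature once the bookkeeping of conventions (max-plus versus min-plus, $M$-convex versus $M$-concave, the $\alpha!$-normalization) is settled. I would fix at the outset the max-plus convention used in the statement.

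The equivalences (i) $\Leftrightarrow$ (ii) $\Leftrightarrow$ (iii) are essentially definitional plus one classical translation. A valuated matroid on $[n]$ is an $M$-concave function whose effective domain lies in $\{0,1\}^{n}$; when the underlying matroid is $\mathcal{U}_{k,n}$ the domain is all of $\binom{[n]}{k}$, i.e.\ the weight-$k$ vertices of $\Delta(k,n)$, and passing from $\mu$ to $-\mu$ interchanges $M$-concavity and $M$-convexity, which gives (i) $\Leftrightarrow$ (ii). Since that domain is $M$-convex, the symmetric exchange property is equivalent to the local exchange property, i.e.\ to the inequalities for pairs with $|\alpha-\beta|_{1}=4$; writing $\alpha = e_{I}$, $\beta = e_{J}$ with $|I \triangle J| = 4$ and letting the transferred indices range, this family of inequalities is precisely the statement that the maximum among the three products in the corresponding three-term relation is attained at least twice. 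I would reproduce this one-line translation and cite \cite{maclagan2021introduction}.

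For (iii) $\Leftrightarrow$ (iv) I would invoke the correspondence between tropical Pl\"ucker vectors and matroid subdivisions of the hypersimplex: $\mu$ as a height function on the vertices of $\Delta(k,n)$ induces a regular subdivision all of whose maximal cells are matroid polytopes exactly when it satisfies the tropical three-term relations, the point being that a non-matroidal edge in the induced subdivision is detected by, and only by, a violated three-term relation; I would cite \cite{speyer2008tropical} and \cite{maclagan2021introduction}. For (iii) $\Leftrightarrow$ (v) I would quote \cite{branden2020lorentzian} directly: a homogeneous polynomial with $M$-convex support (here the support $\mathcal{B}=\binom{[n]}{k}$ is even that of the uniform matroid) is Lorentzian iff the logarithms of its coefficients form an $M$-concave function, equivalently a point of the Dressian — so after matching the $c_{\alpha}/\alpha!$ normalization, trivial on $\{0,1\}^{n}$, with the convention of (i)--(ii) this is the required statement. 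Finally, (iii) $\Leftrightarrow$ (vi) is a repackaging of (iv): the induced matroid subdivision, read off at each vertex of $\Delta(k,n)$, yields a compatible family of trees whose edge-length data is precisely a generalised metric tree arrangement, and conversely; for $k=3$ this is the classical metric tree arrangement picture, and in general I would cite \cite{tewari2022generalized}.

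The main obstacle is not any new mathematics but the reconciliation of conventions across the four source papers so that the six conditions are literally equivalent as printed, together with quoting the equivalence with (vi) in the correct generality. I expect aligning the Lorentzian side (v) and the tree-arrangement side (vi) with the remaining conditions to require the most care.
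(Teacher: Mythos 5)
Your proposal is correct and matches the paper's treatment: the paper offers no proof of this theorem at all, presenting it explicitly as a collection of previously known equivalences with citations to exactly the sources you invoke (\cite{maclagan2021introduction}, \cite{speyer2008tropical}, \cite{branden2020lorentzian} for (v), and \cite{tewari2022generalized} for (vi)). Your organization around condition (iii) and the sketched convention-reconciliations (max/min, $M$-convex/$M$-concave, the $\alpha!$-normalization) supply more detail than the paper itself, and nothing in them is off track.
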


These equivalences can also be extended to the local Dressian $Dr(\mathcal{M})$ \cite{olarte2019local}, which can be defined for an arbitrary matroid $\mathcal{M}$, and it generalizes the notion of Dressian $Dr(k,n)$, and we list the corresponding equivalences for a point residing in the local Dressian in Theorem \ref{thm:Dressian_local},

\begin{theorem}\label{thm:Dressian_local}
Let $\mu$ be a point in $Dr(\mathcal{M})$. Then the following are equivalent

\begin{enumerate}
    \item $\mu$ is a valuated matroid with the underlying matroid $\mathcal{M}$.
    \item $\mu$ defines a $M-convex$ function on the matroid $\mathcal{M}$.
    \item $\mu$ satisfies the tropical three-term Pl\"ucker relations.
    \item $\mu$ as a weight vector induces a regular matroidal subdivision of $\mathcal{P}_{M}$.
    \item $\mu = \text{trop}(f_{t})$, where $f_{t}$ is a Lorentzial polynomial defined on the bases set $\mathcal{B}$ of $\mathcal{M}$.
\end{enumerate}
\end{theorem}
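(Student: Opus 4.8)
The plan is to run everything through condition (2) as a hub. The equivalences among (1), (2), (3) and (4) are exactly what the theory of local Dressians in \cite{olarte2019local} provides: that paper introduces $Dr(\mathcal{M})$ in order to carry Dress--Wenzel's valuated matroids, Murota's discrete convexity \cite{dcamurota}, and Speyer's matroid-subdivision picture \cite{speyer2008tropical} over from the uniform matroid $\mathcal{U}_{k,n}$ to an arbitrary $\mathcal{M}$, and it establishes $(1)\Leftrightarrow(2)\Leftrightarrow(3)\Leftrightarrow(4)$ in that generality. The one item not contained in that package is (5), and I would obtain it by specializing the tropicalization theorem for Lorentzian polynomials of \cite{branden2020lorentzian} to the multiaffine case and reading it relative to $\mathcal{M}$; the point is that that theorem is already stated at the level of $M$-convex supports, so it needs no real modification.

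In a bit more detail: a point of $Dr(\mathcal{M})$ is a function $\mu$ on $\binom{[n]}{k}$ that is finite exactly on the base set $\mathcal{B}$ of $\mathcal{M}$, so $\text{dom}(\mu)=\mathcal{B}\subseteq\{0,1\}^{n}$, and $\mathcal{B}$, being a matroid base set, is $M$-convex. With an $M$-convex effective domain, the symmetric exchange property of $\mu$ — equivalently, up to the $\min/\max$ and concave/convex conventions used in the statement, the condition that $\mu$ is a valuated matroid with underlying matroid $\mathcal{M}$ — is equivalent to the local exchange property on pairs $\alpha,\beta$ with $|\alpha-\beta|_{1}=4$; this is the local-to-global theorem for $M$-convex functions \cite{dcamurota}, and since such a local exchange inequality is literally a three-term tropical Pl\"ucker relation with the non-basis coordinates sent to $\infty$, it yields $(1)\Leftrightarrow(2)\Leftrightarrow(3)$. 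For $(3)\Leftrightarrow(4)$ I would quote the matroid-subdivision criterion: $\mu$ satisfies the three-term relations iff, viewed as a height function on the vertices of $\mathcal{P}_{M}$, it induces a regular subdivision all of whose cells are matroid polytopes. Speyer's proof of the hypersimplex case \cite{speyer2008tropical,maclagan2021introduction} proceeds via a local criterion — matroidality can be checked face by face, in fact on the octahedral faces isomorphic to $\Delta(2,4)$ — and this transfers to the possibly lower-dimensional $\mathcal{P}_{M}$, which is precisely what is carried out in \cite{olarte2019local}.

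It remains to connect (5). Here I would pass to a valued field $\mathbb{K}$ lifting $\mathbb{R}$ — the Puiseux series field with valuation $\mathrm{val}$ will do — and, for a homogeneous multiaffine $f_{t}=\sum_{B}c_{B}x^{B}\in\mathbb{K}[x_{1},\dots,x_{n}]$, set $\text{trop}(f_{t})(B)=\mathrm{val}(c_{B})$ on $\text{supp}(f_{t})$ and $\infty$ off it. The tropicalization theorem of \cite{branden2020lorentzian} says $f_{t}$ is Lorentzian over $\mathbb{K}$ precisely when $\text{supp}(f_{t})$ is $M$-convex and $\text{trop}(f_{t})$ is an $M$-convex function, and $M$-convex subsets of $\{0,1\}^{n}$ are exactly matroid base sets, so this statement is already available in the needed generality. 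Thus if $f_{t}$ is Lorentzian with support $\mathcal{B}$, then $\mu=\text{trop}(f_{t})$ is $M$-convex with effective domain $\mathcal{B}$, which is (2); conversely, given an $M$-convex $\mu$ with $\text{dom}(\mu)=\mathcal{B}$, I would take $c_{B}=t^{\mu(B)}$ (or a unit multiple thereof), so that $f_{t}=\sum_{B\in\mathcal{B}}c_{B}x^{B}$ has support exactly the bases of $\mathcal{M}$ and $\text{trop}(f_{t})=\mu$, and the same theorem returns (5). Since this is a collecting result I do not expect a substantive obstacle; the one thing to be careful about is exactly the bookkeeping of conventions — making sure that the tropicalization of a $\mathbb{K}$-valued Lorentzian polynomial supported on $\mathcal{B}$ lands in the \emph{local} Dressian $Dr(\mathcal{M})$ and not merely in $Dr(k,n)$ (which is automatic once the support is pinned to $\mathcal{B}$), and that the sign and semiring conventions relating valuated matroids, $M$-convex functions and points of $Dr(\mathcal{M})$ are consistent between \cite{branden2020lorentzian} and \cite{olarte2019local}.
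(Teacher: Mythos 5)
Your proposal is correct and follows essentially the same route as the paper, which states this theorem as a collection of known equivalences, attributing (i)--(iv) to the local Dressian framework of \cite{olarte2019local} and (v) to \cite[Theorem 3.20]{branden2020lorentzian} --- exactly the citation structure you use, with the intermediate details (local exchange, Speyer's subdivision criterion, the construction $c_{B}=t^{\mu(B)}$) filled in. One small caution: \cite[Theorem 3.20]{branden2020lorentzian} is an existence statement (every $M$-convex $\mu$ is the tropicalization of \emph{some} Lorentzian polynomial), not a pointwise criterion that any polynomial with $M$-convex support and $M$-convex tropicalization is Lorentzian, so your ``precisely when'' phrasing overstates it --- but you only invoke the two directions that are actually true, so the argument stands.
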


The categorization given in (v) is one of the most recent ones, proven in 
\cite[Theorem 3.20]{branden2020lorentzian}. 

Our aim now is to possibly refine the characterization of the Dressian, concerning Lorentzian polynomials. We know that the class of Loentzian polynomials contains the class of stable polynomials \cite{branden2020lorentzian}. Of interest in this regard is the work in \cite{purbhoo2018total}, in which the author relates the notions of stability and total nonnegativity. We first briefly describe the setup used in \cite{purbhoo2018total} to make ideas clearer.  Consider the matrix $M \in \text{Mat(k,n)}$ of rank $k$ over the field of complex number. The column space of the matrix provides us a k-dimensional subspace $V$ which lies in the $Gr(k,n)$. Let $M[I]$ denote the $k \times k$ submatrix of $M$ with row set $I \in \binom{n}{k}$. The \emph{Pl\"ucker coordinates} of $V$ are the maximal minors $ 
\Bigl\{ \text{det}(M [I]) : I \in \binom{n}{k} \Bigr\}$. The homogenous multi-affine polynomial, 

\begin{equation}\label{eq:ply_nonnegative}
 f =    \sum_{I \in \binom{n}{k}} \text{det}(M [I]) \> x^{I}
\end{equation}    

where $x^{I} = \prod_{i \in I} x_{i}$, is said to \emph{represent} $V$ in $Gr(k,n)$ \cite{purbhoo2018total}. Additionally, a necessary and sufficient condition for a polynomial to represent $V$ is that the coefficients satisfy the quadratic \emph{Pl\"ucker} relations, which are the defining equations for the $Gr(k,n)$, and provide it the structure of a projective variety. We recall the following result from \cite{purbhoo2018total} concerning the multi affine polynomial in Equation \ref{eq:ply_nonnegative}

\begin{theorem}[Theorem 1.1 \cite{purbhoo2018total}]\label{thm:complex_stable}
Let $f \in \mathbb{C}[x]$ be a multi-affine homogenous polynomial of degree $k$ that represents a point $V$ in $Gr(k,n)$. Then, $f$ is stable if and only if $V$ is totally nonnegative.   
\end{theorem}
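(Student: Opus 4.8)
The plan is to prove the two implications separately; the direction ``$f$ stable $\Rightarrow$ $V$ totally nonnegative'' is the softer one, and I would dispatch it first. Write $f=\sum_{I}p_{I}x^{I}$, so the $p_{I}$ are proportional to the Pl\"ucker coordinates of $V$ and $p_{I}\neq 0$ exactly when $I$ is a basis of $\mathcal{M}_{V}$. I would invoke the standard closure properties of stability, all provable via Hurwitz's theorem on locally uniform limits of nonvanishing holomorphic functions: stability is preserved under setting a variable to $0$, under extracting the coefficient of a variable in which the polynomial is affine, and under positive rescalings of a variable. Now fix a $(k-1)$-set $S$ and $a,b\notin S$ with $S\cup a$ and $S\cup b$ both bases; as $(S,a,b)$ varies the pairs $\{S\cup a,\,S\cup b\}$ run over the edges of the basis-exchange graph of $\mathcal{M}_{V}$. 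Extracting the coefficient of $x_{s}$ for every $s\in S$ and then setting $x_{c}=0$ for all $c\notin\{a,b\}$ sends $f$ to the linear form $p_{S\cup a}\,x_{a}+p_{S\cup b}\,x_{b}$, which is therefore stable and not identically zero; but a nonzero linear form $\alpha x_{a}+\beta x_{b}$ is stable precisely when $\beta/\alpha\in\RR_{>0}$, i.e.\ when $\alpha$ and $\beta$ have the same argument. Since the basis-exchange graph of a matroid is connected, all the nonzero $p_{I}$ share one argument $\theta$, so $e^{-i\theta}f$ has nonnegative real coefficients obeying the Pl\"ucker relations; hence $V$ is a real point of $Gr(k,n)$ with a representative all of whose maximal minors are nonnegative, i.e.\ $V$ is totally nonnegative.

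For the converse I would first reduce to the totally positive case: since $Gr_{\geq 0}(k,n)=\overline{Gr_{>0}(k,n)}$, write $V=\lim_{t}V_{t}$ with each $V_{t}$ totally positive, choose matrix representatives continuously so the representing polynomials converge coefficientwise $f_{t}\to f$ with $f\not\equiv 0$ (as $\operatorname{rank}V=k$), and apply the Hurwitz principle. For totally positive $V$ I would use the Loewner--Whitney/Lusztig description of total positivity: $V$ is obtained from a coordinate subspace $\langle e_{i_{1}},\dots,e_{i_{k}}\rangle$ by a finite sequence of elementary operations, each of which either adds a positive multiple of one column to an adjacent column or positively rescales a column. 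The polynomial of the base point is the monomial $x_{i_{1}}\cdots x_{i_{k}}$, a product of stable linear forms; a positive rescaling acts by $x_{i}\mapsto\lambda x_{i}$ with $\lambda>0$, which preserves stability; so the whole argument reduces to checking that the column-addition move is stability-preserving. A short computation with Pl\"ucker coordinates shows that ``add $t$ times column $i$ to column $i+1$'' sends $f=A+Bx_{i}+Cx_{i+1}+Dx_{i}x_{i+1}$ (with $A,B,C,D$ free of $x_{i},x_{i+1}$) to $A+Bx_{i}+(C+tB)x_{i+1}+Dx_{i}x_{i+1}$. This is the substitution $x_{i}\mapsto x_{i}+tx_{i+1}$ applied to $f$ --- which preserves stability for $t\geq 0$ since $x_{i}+tx_{i+1}\in\mathcal{H}$ whenever $x_{i},x_{i+1}\in\mathcal{H}$ --- followed by deletion of the spurious degree-two term in $x_{i+1}$ and a positive rescaling of $x_{i+1}$; and the deletion is realized by polarizing the $x_{i+1}^{2}$-term into a fresh variable (stability-preserving, by the Grace--Walsh--Szeg\H{o}/polarization lemma) and then setting that variable to $0$. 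Hence each $f_{t}$ is stable, and so is $f$.

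I expect the real work to be in the ``totally nonnegative $\Rightarrow$ stable'' direction: pinning down the Loewner--Whitney/Lusztig generation of $Gr_{>0}(k,n)$ from a coordinate subspace by exactly the right elementary moves --- one must use \emph{adjacent} columns for the induced sign on the Pl\"ucker coordinates to come out $+1$ --- and then verifying cleanly that the column-addition move preserves stability through the polarize-then-specialize manoeuvre. The ``stable $\Rightarrow$ totally nonnegative'' direction, by contrast, is essentially formal, resting only on the standard closure properties of stable polynomials, the two-variable base case, and connectedness of the basis-exchange graph.
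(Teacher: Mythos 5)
There is nothing in this paper to compare against: the statement is imported verbatim as Theorem~1.1 of \cite{purbhoo2018total} and is cited, not proved, so your sketch can only be judged on its own terms (and against Purbhoo's original article). On those terms the direction ``stable $\Rightarrow$ totally nonnegative'' is correct and essentially a self-contained proof of the special case of the Choe--Oxley--Sokal--Wagner phase theorem that is needed here: coefficient extraction in a variable in which $f$ is affine, setting variables to $0$, and positive scalings all preserve stability (or produce $0$); the two-variable restriction $p_{S\cup a}x_a+p_{S\cup b}x_b$ is stable iff the two nonzero coefficients have equal phase; and connectivity of the basis-exchange graph propagates a common phase $\theta$, after which the real Pl\"ucker vector of $e^{-i\theta}f$ determines a real, totally nonnegative point. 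This is the same mechanism the present paper leans on one step later, in Corollary~\ref{cor:real_stable}, via the phase theorem of \cite{choe2004homogeneous}.

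For the converse, your two computations are right: because the columns are adjacent, the induced sign is $+1$ and the move ``add $t\ge 0$ times column $i$ to column $i+1$'' sends $A+Bx_i+Cx_{i+1}+Dx_ix_{i+1}$ to $A+Bx_i+(C+tB)x_{i+1}+Dx_ix_{i+1}$; and your polarize--specialize--rescale manoeuvre is a correct hands-on proof that passing from $f(x_i\mapsto x_i+tx_{i+1})$ to its multiaffine part preserves stability, so each elementary move and each positive rescaling preserves stability, and the Hurwitz limit handles $Gr_{\geq 0}=\overline{Gr_{>0}}$. The one load-bearing step you leave to the literature is exactly the one you flag: that every totally positive $V$ is reached from a coordinate subspace by finitely many such adjacent-column moves and scalings, and that the Pl\"ucker-nonnegative Grassmannian is the closure of the Pl\"ucker-positive part. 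Be aware that neither follows from the Loewner--Whitney theorem alone (which factors square invertible totally nonnegative matrices); what you need is the Lusztig/Marsh--Rietsch parametrization together with Rietsch's identification of Lusztig's definition with Pl\"ucker nonnegativity, or equivalently Postnikov's parametrization of positroid cells (e.g.\ via BCFW-bridge decompositions), and without such a citation your argument only proves stability for points in the orbit of coordinate subspaces under positive elementary moves. With that input made precise the proof closes, so I would call this a complete and correct strategy with one clearly identified nontrivial external ingredient, rather than a gap.
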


We state the above result over the field of real numbers,

\begin{corollary}\label{cor:real_stable}
Let $f \in \mathbb{R}[x]$ be a multi-affine homogenous polynomial of degree $k$ that represents a point $V$ in $Gr(k,n)$. Then, $f$ is stable if and only if $V$ is totally nonnegative. Moreover, if $\mathbb{K}$ is a real closed field and $f \in \mathbb{K}[x]$ be a multi-affine homogenous polynomial of degree $k$ that represents a point $V$ in $Gr(k,n)$ then, $f$ is stable if and only if $V$ is totally nonnegative.   
\end{corollary}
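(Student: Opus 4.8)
The real case is a base-change observation, and the real closed field case then follows by a first-order transfer argument.

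Consider first $\KK=\RR$. Since $\RR[x]\subseteq\CC[x]$, if $f\in\RR[x]$ is multi-affine, homogeneous of degree $k$, and represents $V\in Gr(k,n)$, then the same $f$ represents the complexification $V_{\CC}\in Gr(k,n)(\CC)$, whose Pl\"ucker coordinates are again the (real) coefficients $p_I$ of $f$ in the same normalization. Stability is by definition non-vanishing on $\mathcal{H}^n\subseteq\CC^n$, hence is literally the same condition on $f$ whether $f$ is read in $\RR[x]$ or in $\CC[x]$, so Theorem~\ref{thm:complex_stable} applies verbatim: $f$ is stable iff $V_{\CC}$ is totally nonnegative. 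It remains to note that $V_{\CC}$ is totally nonnegative iff $V$ is --- a common scalar $\lambda\in\CC^\times$ making all the real numbers $p_I$ nonnegative is necessarily real, so total nonnegativity of $V_{\CC}$ says exactly that all the $p_I$ share a common sign, which is total nonnegativity of the real point $V$. This proves the first assertion.

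Now let $\KK$ be an arbitrary real closed field. I would first fix the notion of stability over $\KK$: writing $\KK[i]=\KK\oplus\KK i$ for the algebraic closure and $\operatorname{Re}(a+bi):=a$, call $f\in\KK[x]$ stable if it does not vanish on $\{z\in\KK[i]:\operatorname{Re}z>0\}^n$, or is identically zero. Substituting $x_j=a_j+b_j i$ yields $f(x)=P(a,b)+iQ(a,b)$ with $P,Q$ fixed real polynomials whose coefficients depend $\KK$-linearly on the $p_I$, and over a real closed field $P^2+Q^2=0$ iff $P=Q=0$. Hence, for $f\not\equiv 0$, stability of $f$ is captured by the first-order sentence
\[
\forall a_1,\dots,a_n\;\forall b_1,\dots,b_n\;\Bigl[\,{\textstyle\bigwedge_{j=1}^{n}}\,a_j>0\;\longrightarrow\;P(a,b)^2+Q(a,b)^2>0\,\Bigr]
\]
in the language of ordered fields, with the $p_I$ as parameters. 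Likewise, total nonnegativity of $V$ is the quantifier-free condition $\bigwedge_{I,J}p_Ip_J\ge 0$, since the $p_I$ are precisely the Pl\"ucker coordinates of $V$ in one fixed normalization. Therefore, for our fixed $k,n$ the corollary is a single first-order sentence $\Phi_{k,n}$: for all tuples $(p_I)_{I\in\binom{[n]}{k}}$ that are not all zero and satisfy the quadratic Pl\"ucker relations, the displayed stability sentence holds if and only if $\bigwedge_{I,J}p_Ip_J\ge 0$. By the case $\KK=\RR$ above we have $\RR\models\Phi_{k,n}$; since the theory $\mathrm{RCF}$ of real closed fields is complete (Tarski) and both $\RR$ and $\KK$ are real closed, $\KK\models\Phi_{k,n}$, which is the claim.

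The step I expect to require the most care is the first-order encoding rather than the transfer itself: one must verify that ``non-vanishing on the half-plane of $\KK[i]$'' is faithfully expressed by the $\forall a,b$ sentence above --- in particular handling the degenerate case $f\equiv 0$ (excluded by requiring the $p_I$ not all zero) and ensuring the $p_I$ actually define a point of $Gr(k,n)(\KK)$ (imposed by the Pl\"ucker relations) --- and that ``totally nonnegative'' is genuinely the common-sign condition on the $p_I$. With these translations in hand, invoking completeness of $\mathrm{RCF}$ is routine.
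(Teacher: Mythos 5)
Your proposal is correct and follows essentially the same route as the paper: reduce the real statement to Purbhoo's complex theorem (Theorem~\ref{thm:complex_stable}) and then transfer to an arbitrary real closed field via completeness of RCF / the Tarski principle, which is exactly the paper's second step. The only differences are cosmetic: the paper justifies the real case by citing the phase theorem of Choe--Oxley--Sokal--Wagner (homogeneous stable polynomials have nonnegative real coefficients up to a scalar), whereas you argue by direct restriction $\RR[x]\subseteq\CC[x]$ together with the observation that total nonnegativity of $V_{\CC}$ and of $V$ coincide, and you make explicit the first-order encoding of stability and total nonnegativity that the paper leaves implicit when invoking quantifier elimination.
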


\begin{proof}
With the well-known \enquote{phase theorem} \cite[Theorem 6.1]{choe2004homogeneous} we know that every homogeneous stable polynomial has non-negative real coefficients up to scalar multiples. Hence, the stability of the polynomial in Theorem \ref{thm:complex_stable} over $\mathbb{C}$ is equivalent to stability of over $\mathbb{R}$ \cite{purbhooemail}.  

Since, the field of real numbers also allows quantifier elimination, therefore by invoking the Tarski principle as done in \cite{branden2020lorentzian}
, we can extend Corollary \ref{cor:real_stable} over real closed fields as well.
\end{proof}

We also know that $V \in Gr(k,n)$ determines a representable matroid of rank $k$ on the set $[n]$, by taking the bases to be the indices of the nonzero Pl\"ucker coordinates. If $V$ is totally nonnegative, this matroid is a positroid \cite{purbhoo2018total}. With this observation, we want to consider a restriction of a result on matroids \cite[Theorem 3.20]{branden2020lorentzian} to the class of positroids. We first recall this result,

\begin{theorem}[Theorem 3.20 \cite{branden2020lorentzian}]\label{thm:lorent}
The following conditions are equivalent for any function $\mu : \Delta(k,n) \rightarrow \mathbb{Q} \cup \{ \infty \} $

\begin{enumerate}
    \item the function $\mu$ is M-convex;
    \item there is a Lorentzian polynomial in $\mathbb{K}[w_{1} , \hdots , w_{n}]$ whose tropicalization is $\mu$.
\end{enumerate}
\end{theorem}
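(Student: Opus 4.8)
The plan is to fix a convenient valued field, reduce the theorem to its degree-$2$ case, and prove the two implications there.

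\emph{Setup and reduction.} I would work over $\KK = \RR\{\{t\}\}$, the real closed field of real Puiseux series, with its natural valuation $v$, which is order compatible: $0 < a \le b$ forces $v(a) \ge v(b)$, and $v(a+b) = \min(v(a), v(b))$ whenever $a$ and $b$ have the same sign. For a homogeneous $f = \sum_\alpha \tfrac{c_\alpha}{\alpha!}\,w^\alpha \in \KK[w]$ of degree $k$ one has $\text{trop}(f)(\alpha) = v(c_\alpha)$, so $\text{dom}(\text{trop}\,f) = \text{supp}(f)$. Two localisations now run in parallel. On the analytic side, by the recursive definition $f \in L^k_n(\KK)$ iff $\text{supp}(f)$ is $M$-convex and $\partial^\gamma f \in L^2_n(\KK)$ for every $\gamma$ with $|\gamma| = k-2$. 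On the combinatorial side, using the local exchange property together with the one-line fact that the intersection of an $M$-convex subset of $\ZZ^n$ with $\NN^n$ is again $M$-convex, $\mu$ is $M$-convex iff $\text{dom}(\mu)$ is $M$-convex and each contraction $\mu_\gamma(\beta) := \mu(\beta+\gamma)$ is $M$-convex on its natural domain (for all $\gamma$ with $|\gamma| = k-2$). Since $\text{trop}(\partial^\gamma f) = \mu_\gamma$ by bookkeeping with the $\alpha!$, the theorem reduces to its degree-$2$ case: for a function $\nu$ on the degree-$2$ monomials, $\nu$ is $M$-convex iff the symmetric matrix $H(\nu) := \bigl(t^{\nu(e_i+e_j)}\bigr)_{i,j}$ --- with $(i,j)$-entry $0$ where $e_i + e_j \notin \text{dom}\,\nu$ --- has at most one positive eigenvalue (the Hessian condition defining $L^2_n(\KK)$).

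\emph{From (ii) to (i).} Suppose $f \in L^k_n(\KK)$ and $\text{trop}(f) = \mu$. Then $\text{supp}(f) = \text{dom}(\mu)$ is $M$-convex, and by the reduction it remains to prove that $H(\nu)$ having at most one positive eigenvalue implies $\nu$ is $M$-convex. Fix four indices and let $H'$ be the corresponding $4 \times 4$ principal submatrix of $H(\nu)$: it is symmetric with zero diagonal and nonnegative entries, and inherits the bound of one positive eigenvalue. A direct expansion gives $\det H' = X^2 + Y^2 + Z^2 - 2XY - 2YZ - 2ZX$, where $X = H'_{12}H'_{34}$, $Y = H'_{13}H'_{24}$, $Z = H'_{14}H'_{23}$. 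Since $H' \ne 0$ has trace $0$, it has a positive and a negative eigenvalue; with the hypothesis its eigenvalues are $\lambda_1 > 0 \ge \lambda_2 \ge \lambda_3 \ge \lambda_4$, so $\det H' = \lambda_1(\lambda_2\lambda_3\lambda_4) \le 0$, that is, $\sqrt X, \sqrt Y, \sqrt Z$ satisfy the triangle inequality in $\KK_{\ge 0}$. Squaring $\sqrt X \le \sqrt Y + \sqrt Z$ and applying $v$ turns this, via order compatibility, into $\nu_{12} + \nu_{34} \ge \min(\nu_{13} + \nu_{24},\ \nu_{14} + \nu_{23})$ --- a tropical three-term Pl\"ucker relation (the degenerate cases, where some $H'$-entry vanishes, are covered by the same determinant becoming a perfect square forced to be $\le 0$). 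Together with $M$-convexity of the support, these relations are precisely the local exchange property, so $\nu$, and hence $\mu$, is $M$-convex.

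\emph{From (i) to (ii), and the main obstacle.} Given $M$-convex $\mu$, I would take $f := \sum_{\alpha \in \text{dom}\,\mu} \lambda^\alpha\, t^{\mu(\alpha)}\,\tfrac{w^\alpha}{\alpha!}$ for positive reals $\lambda_i$, so that $\text{trop}(f) = \mu$ holds automatically; since rescaling the variables preserves the Lorentzian property it is enough to show $f \in L^k_n(\KK)$ for $\lambda = \1$, and by the reduction this is the degree-$2$ claim ``$\nu$ $M$-convex $\Rightarrow H(\nu)$ has at most one positive eigenvalue'' for each contraction $\nu = \mu_\gamma$. The plan for this base case is to use that an $M$-convex $\nu$ induces a regular subdivision of $\conv(\text{dom}\,\nu)$ into (poly)matroid polytopes (Theorem~\ref{thm:Dressian}(iv)): after substituting a small $t = \epsilon > 0$ and rescaling variables adapted to each cell, $H(\nu)(\epsilon)$ degenerates, scale by scale, into direct sums of Hessians of basis-generating polynomials of those matroids, each of which has exactly one positive eigenvalue because matroid basis-generating polynomials are Lorentzian; a multi-scale (Puiseux--Newton) analysis of the eigenvalues of $H(\nu)(\epsilon)$ as $\epsilon \to 0$ should then show that all of them except the single positive one are negative for small $\epsilon$, whereupon the Tarski principle transfers the conclusion to $\KK$. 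I expect this last step to be the main obstacle. ``At most one positive eigenvalue'' is a condition on every two-dimensional subspace, not on coordinate principal minors alone, so it cannot be checked minor by minor; one must control the inertia of $H(\nu)(\epsilon)$ across all cells of the subdivision and at all orders in $\epsilon$, the many zero eigenvalues of the leading-order (matroidal) matrices being exactly where the finer tropical Pl\"ucker data of $\nu$ must intervene to push them negative. This is also why the lifting cannot use generic coefficients: matching the $4\times 4$ triangle inequalities across all contractions $\mu_\gamma$ simultaneously imposes cross-ratio identities $c_\alpha c_{\alpha'} = c_{\alpha''} c_{\alpha'''}$ on quadruples, whose only positive solutions are of product type $c_\alpha = \lambda^\alpha t^{\mu(\alpha)}$. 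Either way, the degree-$2$ statement over $\KK$ is the heart of the proof.
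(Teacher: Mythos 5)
This theorem is quoted by the paper from Br\"and\'en--Huh (their Theorem 3.20) and is not proved in the paper at all, so there is no in-paper argument to compare against; judging your attempt on its own merits, it is incomplete. The decisive gap is exactly the one you flag yourself: the degree-$2$ base case of (i) $\Rightarrow$ (ii), namely that for an $M$-convex $\nu$ on the second discrete simplex the symmetric matrix $\bigl(t^{\nu(e_i+e_j)}\bigr)_{i,j}$ has at most one positive eigenvalue over $\KK$. Your text offers only a plan (``a multi-scale (Puiseux--Newton) analysis \ldots should then show''), and the plan as stated is not obviously executable: invoking Theorem \ref{thm:Dressian}(iv) and Lorentzianity of matroid basis-generating polynomials controls leading-order blocks but, as you note, does not by itself control the inertia contributed by the many zero eigenvalues at leading order. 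Since (i) $\Rightarrow$ (ii) is the substantive half of the theorem (it is what the present paper actually uses, via the construction of a polynomial with prescribed tropicalization), the proposal as written proves only the easier implication.

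Two further points on the (ii) $\Rightarrow$ (i) direction, which is otherwise sound. First, $\mu$ is defined on all of $\Delta(k,n)$, not only on $\{0,1\}$-vectors, so the Hessians of the degree-$2$ contractions need not have zero diagonal; your $4\times 4$ determinant identity and the resulting triangle inequality only yield the exchange inequalities for quadruples of distinct indices. The exchange inequalities involving repeated indices, e.g. $\nu(2e_i)+\nu(2e_j)\geq 2\nu(e_i+e_j)$ and $\nu(2e_i)+\nu(e_j+e_k)\geq \nu(e_i+e_j)+\nu(e_i+e_k)$, must be derived separately; they do follow from $2\times 2$ and $3\times 3$ principal submatrices via Cauchy interlacing (valid over the real closed field $\KK$ by transfer), but this has to be said, and the four-distinct-index case with nonzero diagonal should be reduced to the hollow case or handled by a different minor computation. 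Second, in this direction the given Lorentzian polynomial has coefficients $c_\alpha\in\KK_{\geq 0}$ with $v(c_\alpha)=\mu(\alpha)$, not literally $t^{\mu(\alpha)}$; your argument goes through verbatim with the actual Hessian entries, but the write-up conflates the two matrices. With these repairs the easy direction is fine; the theorem still needs a genuine proof of the quadratic case of (i) $\Rightarrow$ (ii), which is precisely the content supplied in Br\"and\'en--Huh's original argument.
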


We also elaborate on the background setup for this statement used in \cite{branden2020lorentzian}. In this statement, the field $\mathbb{K}$ is the real convergent Puiseux series 

\[ \mathbb{K} = \bigcup_{k \geq 1} \mathbb{R}((t^{1/k}))_{\text{conv}} \]

which is a \emph{real closed field} and its corresponding algebraic closure is the field 

\[  \overline{\mathbb{K}} = \bigcup_{k \geq 1} \mathbb{C}((t^{1/k}))_{\text{conv}}  \]

The definition of Lorentzian polynomials over $\mathbb{K}$ are also provided in \cite[Definition 3.17]{branden2020lorentzian}. We propose the following refinement of Theorem \ref{thm:lorent}, when we restrict ourselves to the class of positroids, in which case we see that we obtain a relation to homogenous stable polynomials which are a subclass of Lorentzian polynomials. 

\begin{theorem}\label{thm:positroid_stable}
Let $V$ represent a nonnegative point in $Gr(k,n)$ and let $\mathcal{M}$ be a positroid of rank $k$ on $[n]$ elements with bases $\mathcal{B}$ that corresponds to $V$. Let $p_{B}$ represent the nonzero \emph{Pl\"ucker coordinates} of $V$, which also satisfy the quadratic Plücker relations. Then for $\mu \in \text{Trop}\>Gr(\mathcal{M}) \subseteq  Dr(\mathcal{M})$, the multi affine homogenous stable polynomial $f$ that \emph{represents} $V$,
\[   f =    \sum_{B \in \mathcal{B}} p_{B} \> x^{B} \]
in $\mathbb{K}[x_{1} , \hdots , x_{n}]$ is the one whose tropicalization is $\mu$.
\end{theorem}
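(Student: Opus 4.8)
The plan is to extract the statement from Corollary~\ref{cor:real_stable} and Theorem~\ref{thm:lorent}, carrying the real closed field $\mathbb{K}$ of convergent Puiseux series along throughout. First I would pin down the polynomial. Since a multi-affine homogeneous polynomial of degree $k$ represents $V$ exactly when its coefficient vector satisfies the quadratic Pl\"ucker relations, and those relations cut out $Gr(k,n)$ inside $\mathbb{P}^{\binom{n}{k}-1}$, the coefficient vector of any representative of $V$ equals the Pl\"ucker vector $(p_B)_{B\in\mathcal B}$ up to a common nonzero scalar of $\mathbb{K}$. Hence $f=\sum_{B\in\mathcal B}p_Bx^B$ is, up to rescaling, the unique representative of $V$, and its tropicalization $\mu:=\text{trop}(f)=(\text{val}(p_B))_{B\in\mathcal B}$ is determined up to an additive constant. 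This already accounts for the uniqueness built into the phrase ``$f$ is the one whose tropicalization is $\mu$,'' once we know such an $f$ is stable and such a $\mu$ lies in $\text{Trop}\,Gr(\mathcal M)$.

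Next I would verify the two substantive points. Stability: $V$ is a nonnegative (totally nonnegative) point of $Gr(k,n)$ over the real closed field $\mathbb{K}$, so Corollary~\ref{cor:real_stable}, whose real closed field form is established there via the Tarski transfer principle, applies directly and shows $f$ is stable. Membership of $\mu$: because $V$ is an honest $\mathbb{K}$-point, hence an $\overline{\mathbb{K}}$-point, of the stratum of $Gr(k,n)$ consisting of points with matroid $\mathcal M$, its Pl\"ucker vector lies in that stratum, and taking valuations places $\mu$ in the tropicalization of the stratum, i.e.\ in $\text{Trop}\,Gr(\mathcal M)$; the inclusion $\text{Trop}\,Gr(\mathcal M)\subseteq Dr(\mathcal M)$ is the standard one, as the three-term relations lie in the Pl\"ucker ideal so the tropical variety sits inside the tropical prevariety cut out by them. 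Finally, every homogeneous stable polynomial is a scalar multiple of a Lorentzian polynomial, so $f$ is Lorentzian and Theorem~\ref{thm:lorent} (equivalently Theorem~\ref{thm:Dressian_local}(v)) identifies $\mu$ as an $M$-convex function on $\mathcal B$ --- exactly the Lorentzian representative promised there for this $\mu$, which by the first paragraph is the unique representative of $V$. This closes the argument under the reading that $\mu$ is the tropicalization of the given $V$.

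The genuinely delicate issue --- the one I would treat most carefully --- is the quantifier over $\mu$: to claim that \emph{every} $\mu\in\text{Trop}\,Gr(\mathcal M)$ arises as above one must produce, from an abstract $\mu$, a \emph{totally nonnegative} lift over $\mathbb{K}$. The fundamental theorem of tropical geometry (Kapranov lifting) yields some $\widetilde V\in Gr(k,n)(\overline{\mathbb{K}})$ with matroid $\mathcal M$ and $\text{val}(p_B(\widetilde V))=\mu_B$, but upgrading $\widetilde V$ to a totally nonnegative $\mathbb{K}$-point with the same valuations is where the positroid hypothesis is indispensable: here I would invoke Postnikov's parametrizations of the positroid cell $\Pi_{\mathcal M}^{>0}$ by subtraction-free rational expressions in positive parameters, evaluated over the ordered field $\mathbb{K}$ with parameters of suitably chosen valuation, so that the resulting $V\in Gr(k,n)(\mathbb{K})$ is totally nonnegative and has Pl\"ucker valuations $\mu$. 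I expect this positive-lifting step to be the main obstacle --- it is essentially the only place the positroid assumption does real work, and it is also where one must be careful about exactly which $\mu$ it covers, since for a general matroid (and, strictly, for the part of $\text{Trop}\,Gr(\mathcal M)$ outside its positive part) no such lift exists; the remaining ingredients --- uniqueness of the representative, stability via Corollary~\ref{cor:real_stable}, and $M$-convexity of $\text{trop}(f)$ via Theorem~\ref{thm:lorent} --- are comparatively routine.
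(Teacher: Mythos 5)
Your first two paragraphs are essentially the paper's own proof: the paper likewise forms $f=\sum_{B\in\mathcal{B}}p_{B}x^{B}$, invokes Corollary~\ref{cor:real_stable} (the real-closed-field transfer of Purbhoo's theorem) for stability over $\mathbb{K}$, and observes that the valuations of the Pl\"ucker coordinates satisfy the tropical Pl\"ucker relations, so that $\mu:=\text{trop}(f)$ is a point of $\text{Trop}\,Gr(k,n)\subseteq Dr(\mathcal{M})$. Where you diverge is your third paragraph: the paper does not attempt the converse lifting at all --- its proof reads the statement only in the direction ``given the nonnegative point $V$, the polynomial $f$ representing it is stable and its tropicalization is a point $\mu$ of the tropical Grassmannian inside the Dressian,'' and it never produces a totally nonnegative lift of an abstractly given $\mu\in\text{Trop}\,Gr(\mathcal{M})$. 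So the step you single out as the main obstacle lies outside the paper's argument, and your caution about it is justified: positive lifts (e.g.\ via Postnikov-type parametrizations over the Puiseux field) account only for the positive part $\text{Trop}^{+}\Pi_{\mathcal{M}}=Dr^{+}(\mathcal{M})$ (Theorem~\ref{thm:posDres_equal_postropGrass}), not for all of $\text{Trop}\,Gr(\mathcal{M})$, so under the universal reading of the quantifier on $\mu$ the claim is not established by the paper either --- the theorem is only proved, there as in your first two paragraphs, for $\mu$ arising as the tropicalization of the given $V$. Your uniqueness remark (the representative of $V$ is unique up to a scalar, so $\mu$ is determined up to an additive constant) is a small addition the paper leaves implicit.
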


\begin{proof}

 We consider the homogenous multi-affine polynomial, 

\begin{equation}
 f =    \sum_{B \in \mathcal{B}} p_{B} \> x^{B}
\end{equation}    

where $x^{B} = \prod_{b \in B} x_{b}$, which \emph{represents} $V$. By Corollary \ref{cor:real_stable}, this polynomial is stable over $\mathbb{K}$ with coefficients satisfying the quadratic Pl\"ucker relations. We also realize that the tropicalization $\text{trop}(f)$ is defined by the tropicalization of the coefficients which are just Pl\"ucker coordinates. However, these tropical Pl\"ucker relations are the relations that define a point in the tropical Grassmannian $\text{TropGr}(k,n)$. Therefore, we realize that $\text{trop}(f)$ has coefficients that satisfy the tropical Pl\"ucker relations, which defines a point $\mu \in \text{TropGr}(k,n) \subseteq  Dr(\mathcal{M})$. 
\end{proof}

We also note that this relation between stable polynomials and positroids does not extend to the class of matroids, for example in \cite{branden2007polynomials} it is shown that if we consider the matroid $\mathcal{M}$ as the Fano plane then there is no stable polynomial whose
support is $B$. However, the existence of a homogenous multi affine stable polynomial over the support of a matroid is known for matroids representable over $\mathbb{C}$ \cite[Corollary 8.2]{choe2004homogeneous}. We want to highlight that Theorem \ref{thm:positroid_stable} emphasizes the fact that for the class of positroids, we can actually provide the exact form of the homogenous multi affine stable polynomial whose support is the basis of the positroid and whose tropicalization resides as a point in the Dressian.  

Another result from \cite{branden2020lorentzian} that we are interested in studying is the conjecture concerning the generating function of matroids, which the authors state as follows

\begin{conjecture}[Conjecture 3.12 \cite{branden2020lorentzian}]\label{conj:lorent_conj}
The following conditions are equivalent for any non-empty $J \subseteq \{0, 1\}^{n}$ :

\begin{enumerate}
    \item $J$ is the set of bases of a matroid on $[n]$.
    \item The generating function $f_{J}$ is a homogeneous $\frac{8}{7}$ -Rayleigh polynomial.
\end{enumerate}
\end{conjecture}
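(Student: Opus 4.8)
The plan is to establish the two implications of Conjecture~\ref{conj:lorent_conj} separately; the direction (2)$\Rightarrow$(1) is routine, so I dispose of it first. If $f_J$ is homogeneous of degree $k$ and $\tfrac 87$-Rayleigh, then since $J\subseteq\{0,1\}^n$ homogeneity forces $J\subseteq\binom{[n]}{k}$ and $f_J=\sum_{B\in J}x^B$, so it remains only to verify the basis exchange axiom. For this one shows that any polynomial with nonnegative coefficients that is $c$-Rayleigh for \emph{some} finite $c$ has $M$-convex support; for a subset of $\{0,1\}^n$ this is exactly the matroid axiom. This is the standard exchange argument, parallel to the proof in \cite{branden2020lorentzian} that Lorentzian polynomials have $M$-convex support: a non-$M$-convex support produces a pair $i\ne j$ at which the quotient $f_J\,\partial^i\partial^jf_J\,/\,(\partial^if_J\,\partial^jf_J)$ is unbounded on $\RR^n_{\ge 0}$, which is incompatible with any finite $c$. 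The value $\tfrac 87$ plays no role here.

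All the content is in (1)$\Rightarrow$(2), and the first move is to convert the $c$-Rayleigh condition into a statement purely about basis generating polynomials of minors. Fix a matroid $\mathcal M$ of rank $k$ on $[n]$ with $f=\sum_{B\in\mathcal B}x^B$ and distinct $i,j\in[n]$, and write the multi-affine expansion $f=x_ix_j\,q+x_i\,r+x_j\,s+t$, where $q,r,s,t\in\RR_{\ge 0}[x_\ell:\ell\ne i,j]$ are, respectively, the generating functions of the bases of $\mathcal M$ that contain both $i$ and $j$, that contain $i$ but not $j$, that contain $j$ but not $i$, and that contain neither. Then $\partial^i\partial^jf=q$, $\partial^if=x_jq+r$, $\partial^jf=x_iq+s$, and a short computation gives, for every $c\ge 1$,
\[
c\,\partial^if\,\partial^jf-f\,\partial^i\partial^jf=(c-1)\bigl(x_ix_jq^2+x_iqr+x_jqs\bigr)+\bigl(c\,rs-tq\bigr).
\]
Since $q,r,s,t$ have nonnegative coefficients, this expression is nondecreasing in $x_i$ and in $x_j$ on $\RR^n_{\ge 0}$, and hence nonnegative there for all $x_i,x_j\ge 0$ if and only if $tq\le c\,rs$ holds on $\RR^{n-2}_{\ge 0}$ (the inequality being vacuous when $r$ or $s$ vanishes). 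When $\{i,j\}$ is independent in $\mathcal M$ and its complement spanning, $q,r,s,t$ are exactly the basis generating polynomials of $\mathcal M/\{i,j\}$, $(\mathcal M\setminus j)/i$, $(\mathcal M\setminus i)/j$, and $\mathcal M\setminus\{i,j\}$. Thus Conjecture~\ref{conj:lorent_conj} is equivalent to the assertion that the optimal constant $c$ for which the \emph{four-minor inequality} $tq\le c\,rs$ holds on $\RR^{n-2}_{\ge 0}$---for every matroid $\mathcal M$, every pair $i\ne j$, and the associated quadruple $(q,r,s,t)$---equals $\tfrac 87$, the case $c=1$ being precisely the Rayleigh property of \cite{choe2006rayleigh}.

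To prove the four-minor inequality with $c=\tfrac 87$ I would induct on $n$. The base case follows from the classical facts recalled in the preliminaries: matroids of rank at most two or corank at most two have the half-plane property, hence are strongly Rayleigh, and matroids of rank three are Rayleigh; since the Rayleigh property is preserved under duality \cite{choe2006rayleigh}, \emph{every} matroid on at most seven elements is Rayleigh, so there the inequality holds even with $c=1$. For the inductive step, choose any $\ell\notin\{i,j\}$ and expand each of $q,r,s,t$ in $x_\ell$, writing $q=x_\ell q'+q''$ and similarly for $r,s,t$; the inequality $tq\le c\,rs$ then becomes a quadratic inequality in $x_\ell\ge 0$ whose leading ($x_\ell^2$) and constant terms are precisely the four-minor inequalities for $\mathcal M/\ell$ and $\mathcal M\setminus\ell$ with respect to $\{i,j\}$, both true by the inductive hypothesis, so that only the cross-term in $x_\ell$ remains to be bounded. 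To control that cross-term I would invoke the Lorentzian---equivalently Hodge--Riemann---property of $f$ and its derivatives, which by Theorem~\ref{thm:lorent} applies to the basis polynomial of every matroid and of every minor occurring, and use it to dominate the cross-term by the slack $(c\,r's'-t'q')$ and $(c\,r''s''-t''q'')$ left over from the quadratic and constant parts, without conceding the sharp constant. Sharpness---that $\tfrac 87$ cannot be lowered---should follow from an explicit small matroid, the natural candidate being $\mathbf S_8$, the smallest non-Rayleigh matroid, at the nonnegative point where its Rayleigh quotient attains $\tfrac 87$.

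The main obstacle is exactly this cross-term. The product $tq$ is the part of the four-minor inequality that survives both deletion and contraction of $\ell$, so it is invisible to the naive reductions, and dominating $tq$ by $rs$ with the \emph{optimal} ratio, uniformly over all matroids, appears to require more than the soft Lorentzian machinery---which currently yields only the non-sharp constant $c=2$ \cite{huh2021correlation}. I expect one needs either a genuinely new quadratic inequality in the Chow ring (or the volume-polynomial model) of a matroid, strong enough to be sensitive to the value $\tfrac 87$, or else a reduction of the extremal problem to matroids on a bounded number of elements followed by a finite, possibly computer-assisted, verification; confirming tightness of the whole chain of inequalities at $\mathbf S_8$ would be the final step.
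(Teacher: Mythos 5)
You are attempting to prove a statement that the paper itself records only as an open conjecture (Conjecture 3.12 of \cite{branden2020lorentzian}): the paper offers no proof of it, and proves only the much weaker Corollary \ref{thm:positroid_rayleigh}, namely that the equivalence holds with ``matroid'' replaced by ``Rayleigh matroid,'' which is essentially definitional plus a citation of \cite[Theorem 3.10]{branden2020lorentzian}. Your proposal does not close the conjecture either, and the gap is exactly where you say it is. The reduction of the $c$-Rayleigh condition for a multi-affine basis generating polynomial to the four-minor inequality $tq\le c\,rs$ on $\mathbb{R}^{n-2}_{\ge 0}$ is correct and standard (it is how the Rayleigh property is set up in \cite{choe2006rayleigh}), the base case for matroids on at most seven elements is fine, and in the deletion/contraction induction the $x_\ell^2$ and constant terms are indeed the inequalities for $\mathcal{M}/\ell$ and $\mathcal{M}\setminus\ell$. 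But the linear cross-term in $x_\ell$ is the entire content of the problem: no inequality is supplied that dominates it with the sharp constant $\tfrac{8}{7}$, and you acknowledge that the available Lorentzian/Hodge--Riemann machinery only yields a non-sharp constant (cf.\ \cite{huh2021correlation}) and that one would need ``a genuinely new quadratic inequality'' or a computer-assisted finite check. So what you have is a correct reformulation of the conjecture together with known special cases, not a proof; this matches the status of the statement in the paper and in the literature.

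A secondary issue: in the direction (ii) $\Rightarrow$ (i) you assert that \emph{any} multi-affine polynomial with nonnegative coefficients that is $c$-Rayleigh for some finite $c$ has $M$-convex support, calling it ``the standard exchange argument.'' As written this is an unproved claim, and it is not obviously true for arbitrary finite $c$; what is actually needed (and suffices for $c=\tfrac{8}{7}$, and is what the paper's Corollary \ref{thm:positroid_rayleigh} invokes) is \cite[Theorem 3.10]{branden2020lorentzian}, which establishes matroidality of the support in the relevant range of constants. You should either cite that result or carry out the unboundedness argument explicitly rather than gesture at it.
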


The constant $\frac{8}{7}$ appearing in this conjecture is strict in the sense that for any positive real number $c < \frac{8}{7}$, there is a matroid whose basis generating function is not $c$-Rayleigh \cite{huh2021correlation}.

We now highlight that this conjecture is true for the class of Rayleigh matroids which strictly contain the class of positroids \cite{marcott2016positroids} and moreover, a stronger statement is true for them in the form of Theorem \ref{thm:positroid_rayleigh}.

\begin{corollary}\label{thm:positroid_rayleigh}
The following conditions are equivalent for any non-empty $J \subseteq \{0, 1\}^{n}$ :

\begin{enumerate}
    \item $J$ is the set of bases of a Rayleigh matroid on $[n]$.
    \item The generating function $f_{J}$ is a homogeneous $1-$Rayleigh polynomial.
\end{enumerate}
Essentially, Conjecture \ref{conj:lorent_conj} is true for the class of Rayleigh matroids.    
\end{corollary}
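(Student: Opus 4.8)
The plan is to show that Corollary \ref{thm:positroid_rayleigh} follows almost immediately from the definitions, once we unwind what \enquote{Rayleigh matroid} means and compare the two flavours of generating function introduced in the preliminaries. First I would observe that for $J \subseteq \{0,1\}^n$ the two generating functions coincide: since every $\alpha \in J$ is a $0/1$-vector we have $\alpha! = 1$, so $f_J = \sum_{\alpha \in J} x^\alpha$ is exactly the basis generating polynomial when $J$ is the basis set of a matroid. Thus there is no discrepancy between the \enquote{$f_J$} of Conjecture \ref{conj:lorent_conj} and the basis generating polynomial $M(x)$ of Equation \eqref{eq:gen_func_matroid}.

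Next I would handle the implication (i) $\Rightarrow$ (ii). Suppose $J$ is the basis set of a Rayleigh matroid $\mathcal{M}$. By the very definition of a Rayleigh matroid from \cite{choe2006rayleigh}, the basis generating polynomial of $\mathcal{M}$ is $1$-Rayleigh, i.e.\ $c$-Rayleigh with $c = 1$: the inequality
\[ f(x)\,\partial^{i}\partial^{j}f(x) \le \partial^{i}f(x)\,\partial^{j}f(x) \qquad \text{for all distinct } i,j \text{ and all } x \in \mathbb{R}^n_{\ge 0} \]
is precisely the content of \cite{choe2006rayleigh}'s definition (the constant $c$ is absorbed into the definition used there, so that a Rayleigh matroid is by convention a $1$-Rayleigh matroid). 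Homogeneity of $f_J$ is automatic since all members of $J$ have the same cardinality $k = \mathrm{rk}(\mathcal{M})$. Hence $f_J$ is a homogeneous $1$-Rayleigh polynomial.

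For the converse (ii) $\Rightarrow$ (i), suppose $f_J$ is a homogeneous $1$-Rayleigh polynomial for some non-empty $J \subseteq \{0,1\}^n$. Being $1$-Rayleigh it is in particular $\tfrac{8}{7}$-Rayleigh (the inequality with constant $1$ trivially implies the one with the larger constant $\tfrac{8}{7}$, as all the relevant quantities are nonnegative on $\mathbb{R}^n_{\ge 0}$). Now I would like to invoke the easy direction of Conjecture \ref{conj:lorent_conj} --- namely (ii) $\Rightarrow$ (i) of that conjecture, which is \emph{not} the conjectural direction: the fact that a homogeneous $\tfrac{8}{7}$-Rayleigh (indeed any $c$-Rayleigh with $c$ in the relevant range) multiaffine polynomial supported on $J$ forces $J$ to be a matroid basis set is the established half of the Br\"and\'en--Huh discussion (this is the direction that does not require the delicate constant, and follows from the exchange-type inequalities the Rayleigh condition imposes on the support). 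Once $J$ is known to be a matroid basis set, the $1$-Rayleigh hypothesis on $f_J$ says exactly that this matroid is a Rayleigh matroid by definition. This closes the equivalence.

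The main obstacle is really just bookkeeping about conventions rather than mathematics: one must be careful that the \enquote{Rayleigh} in \cite{choe2006rayleigh} is normalised to $c = 1$ (some sources phrase the Rayleigh inequality with an unspecified constant), and that the direction of Conjecture \ref{conj:lorent_conj} we are borrowing is the proven one. I would include a sentence making explicit that we only use the non-conjectural implication of \cite{branden2020lorentzian}, so that no circularity creeps in; the remaining content --- namely that positroids are Rayleigh and that Rayleigh matroids strictly contain positroids --- is already cited from \cite{marcott2016positroids} and is not needed for the proof of the equivalence itself, only for the surrounding remark.
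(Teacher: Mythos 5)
Your proposal is correct and follows essentially the same route as the paper: (i) $\Rightarrow$ (ii) directly from the definition of a Rayleigh matroid in \cite{choe2006rayleigh} (plus the observation that $f_J$ equals the basis generating polynomial and is homogeneous), and (ii) $\Rightarrow$ (i) by invoking the non-conjectural support result of Br\"and\'en--Huh to get that $J$ is a matroid basis set, which the paper cites explicitly as \cite[Theorem 3.10]{branden2020lorentzian}, and then concluding that this matroid is Rayleigh by definition. The only cosmetic difference is that you route through the $\tfrac{8}{7}$-Rayleigh condition before applying that result, whereas the paper applies it to the $1$-Rayleigh polynomial directly; the content is the same.
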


\begin{proof}
For the implication (i) $\implies$ (ii) by the definition of Rayleigh matroids in \cite{choe2006rayleigh}, the basis generating function of a Rayleigh matroid is 1-Rayleigh. Also, if $f$ is a 1-Rayleigh polynomial, then $f$ is $c$-Rayleigh polynomial for all $c \geq 1$. Additionally, for the opposite implication i.e., (ii) $\implies$ (i) is straightforward in this case by \cite[Theorem 3.10]{branden2020lorentzian}, which implies that $J$ is the basis of a matroid, which in turn is Rayleigh by our assumption in this implication. Therefore, this helps us to see that Conjecture \ref{conj:lorent_conj} is true for the class of Rayleigh matroids.    
\end{proof}

An interesting observation is that the statement of Corollary \ref{thm:positroid_rayleigh} when considered over the class of positroids, fails to hold true which is confirmed from the example of the \emph{Vamos} matroid, which is not representable over any field and hence is not a positroid, even though it is 1-Rayleigh. This also explains the strict inclusion of positroids in the class of Rayleigh matroids.

\section{Future Work}

Motivated by Theorem \ref{thm:Dressian}, we now consider the case of the positive Dressian $Dr^{+}(k,n)$, which is based on the definition of the positive part of a tropical variety introduced in \cite{speyer2005tropical}. A very significant result pertaining to the positive Dressian is the following \cite{speyer2021positive},

\begin{theorem}[Theorem 3.9 \cite{speyer2021positive}]
The positive tropical Grassmannian  $\text{Trop}^{+} Gr(k,n)$ equals the positive Dressian $Dr^{+}(k,n)$. 
\end{theorem}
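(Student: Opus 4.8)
The plan is to establish the two inclusions separately, working over the field $\mathbb{K}$ of real Puiseux series used in the discussion before Theorem~\ref{thm:lorent}, so that ``$\mathbb{K}_{\geq 0}$'' means elements with nonnegative leading coefficient; write $Gr^{+}(k,n)$ for the locus of points of $Gr(k,n)(\mathbb{K})$ all of whose Plücker coordinates lie in $\mathbb{K}_{\geq 0}$, and set $\text{Trop}^{+} Gr(k,n) = \{\,\mathrm{val}(p(V)) : V \in Gr^{+}(k,n)\,\}$. The inclusion $\text{Trop}^{+} Gr(k,n) \subseteq Dr^{+}(k,n)$ is the soft direction: given $V \in Gr^{+}(k,n)$ and a three-term Plücker relation, written in its subtraction-free form $p_{Sik}\,p_{Sjl} = p_{Sij}\,p_{Skl} + p_{Sil}\,p_{Sjk}$ (with $i<j<k<l$ and $|S| = k-2$), all six coordinates lie in $\mathbb{K}_{\geq 0}$, so no cancellation can occur on the right-hand side and hence $\mathrm{val}(p_{Sik}) + \mathrm{val}(p_{Sjl}) = \min\bigl(\mathrm{val}(p_{Sij}) + \mathrm{val}(p_{Skl}),\, \mathrm{val}(p_{Sil}) + \mathrm{val}(p_{Sjk})\bigr)$, which is exactly the defining condition of the positive Dressian; the degenerate cases in which some Plücker coordinates vanish reduce to the same statement for the positroid supported on the nonvanishing coordinates.

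For the reverse inclusion $Dr^{+}(k,n) \subseteq \text{Trop}^{+} Gr(k,n)$, fix $\mu \in Dr^{+}(k,n)$ and proceed in three steps. First, show that the support of $\mu$ is the set of bases of a positroid: it is a matroid by the valuated-matroid description in Theorem~\ref{thm:Dressian_local}, and the one-sided sign pattern forced by the positive three-term conditions is precisely what upgrades ``matroid'' to ``positroid'' in the Grassmann--Plücker-type characterizations of \cite{marcott2016positroids, olarte2019local}. Second, show that $\mu$, regarded as a weight vector on the vertices of the hypersimplex $\Delta(k,n)$, induces a regular subdivision all of whose maximal cells are matroid polytopes of positroids (a \emph{positroidal} subdivision): by Theorem~\ref{thm:Dressian}(iv) the subdivision is already matroidal, and applying the positive three-term conditions cell by cell, where they govern the subdivision of each octahedral face, promotes each cell to a positroid polytope. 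Third, and this is the crux, show that every weight vector inducing a positroidal subdivision lifts to a genuine point of $\text{Trop}^{+} Gr(k,n)$.

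The third step I would handle by induction on the number of maximal cells. If the subdivision is trivial, then $\mu$ lies in the cone of $Dr^{+}(k,n)$ attached to a single positroid $M$, and one realizes $\mu$ directly: using Postnikov's parametrization of the positroid cell of $M$ by a plabic graph, in which every Plücker coordinate is a subtraction-free Laurent polynomial in positive edge parameters, one chooses Puiseux-series values for those parameters so that the induced valuations of the Plücker coordinates---which are the corresponding tropical piecewise-linear functions of the parameter valuations---equal $\mu$ modulo the lineality space, yielding $V \in Gr^{+}(k,n)$ with $\mathrm{val}(p(V)) = \mu$. For a nontrivial subdivision, restrict $\mu$ to each maximal cell to obtain a positive tropical Plücker vector of a smaller positroid, lift each by the inductive hypothesis after a deletion/contraction reduction, and glue: the lifts agree on common faces because deletion and contraction commute with restriction and preserve nonnegativity, so patching over the fan produces a single $V \in Gr^{+}(k,n)$ tropicalizing to $\mu$. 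The main obstacle is exactly this lifting-and-gluing step: in the unsigned setting the Dressian strictly contains the tropical Grassmannian precisely because the tropicalized three-term relations fail to generate the tropical Plücker ideal, so a correct argument must use positivity in an essential way. Concretely, the lemma to isolate and prove is that the positive three-term Plücker relations tropically imply all positive Plücker relations; an appealing way to see this is through the cluster structure on the homogeneous coordinate ring of $Gr(k,n)$, whose exchange relations across a spanning collection of seeds are three-term Plücker relations and whose mutations are subtraction-free, so that $Dr^{+}(k,n)$ is the image of a tropical seed torus under the tropicalized cluster transformations---which is, by definition, $\text{Trop}^{+} Gr(k,n)$.
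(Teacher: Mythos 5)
First, note that the paper does not prove this statement at all: it is imported verbatim as Theorem 3.9 of \cite{speyer2021positive}, so there is no internal proof to compare against; your attempt has to be measured against the argument in that reference (realizability of positive tropical Plücker vectors via the parametrizations of nonnegative cells, after showing the support is a positroid and the induced subdivision is positroidal). Your first inclusion, $\text{Trop}^{+}Gr(k,n)\subseteq Dr^{+}(k,n)$ via the subtraction-free three-term relation and absence of cancellation in valuations, is correct and is essentially the standard argument.

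The hard inclusion is where your proposal has a genuine gap. Your inductive ``lift each maximal cell and glue'' step does not work: a point $\mu$ of the positive Dressian whose subdivision is nontrivial must be realized by a \emph{single} point $V$ over the Puiseux field whose Plücker valuations equal $\mu$ globally; the maximal cells of the subdivision correspond to initial degenerations (faces of the regular subdivision), not to separate points of $Gr^{+}(k,n)$ that can be patched, and there is no gluing operation on Grassmannian points mirroring the gluing of cells, so ``the lifts agree on common faces'' does not produce the required $V$. You do correctly identify the crux --- that the positive three-term relations must tropically imply all positive Plücker relations, equivalently that the tropicalized subtraction-free parametrization surjects onto each cone of $Dr^{+}$ --- but the cluster-algebra justification you offer is asserted rather than proved, and two of its supporting claims are false or non-definitional: for general $(k,n)$ most seeds of the cluster structure on $Gr(k,n)$ contain non-Plücker cluster variables and most exchange relations are not three-term Plücker relations, and $\text{Trop}^{+}Gr(k,n)$ is defined as the valuation image of the totally nonnegative Puiseux points, not ``by definition'' as the image of a tropicalized seed torus (that identification is itself a nontrivial theorem). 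The base case also hides the real work: choosing plabic-graph edge parameters whose valuations push forward to a prescribed $\mu$ is exactly the surjectivity statement that \cite{speyer2021positive} (and, in the local form used later in the paper, \cite{arkani2021positive}) actually prove. So the skeleton is reasonable, but the decisive lifting step is missing a correct argument.
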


A version of this theorem in terms of the local Dressian is proven in \cite{arkani2021positive}

\begin{theorem}[Theorem 9.2 \cite{arkani2021positive}]\label{thm:posDres_equal_postropGrass}
Let $\mathcal{M}$ be a positroid. Then the positive Dressian   $Dr^{+}(\mathcal{M})$ equals the positive tropical positroidal cell $\text{Trop}^{+} \Pi_{\mathcal{M}}$. 
\end{theorem}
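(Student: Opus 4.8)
The plan is to establish the equality by proving the two inclusions $\text{Trop}^{+}\Pi_{\mathcal{M}} \subseteq Dr^{+}(\mathcal{M})$ and $Dr^{+}(\mathcal{M}) \subseteq \text{Trop}^{+}\Pi_{\mathcal{M}}$ separately, with $\mathbb{K}$ the real convergent Puiseux field from the setup of Theorem \ref{thm:lorent} and $\mathbb{K}_{>0} \subseteq \mathbb{K}$ the elements with positive leading coefficient. Here $\Pi_{\mathcal{M}}$ is the positroid variety (the Zariski closure of the open positroid cell $\Pi_{\mathcal{M}}^{\circ}$), $\text{Trop}^{+}\Pi_{\mathcal{M}}$ is the closure of $\{\text{val}(p_{\bullet}(\overline{x})) : \overline{x} \in \Pi_{\mathcal{M}}^{\circ}(\mathbb{K}_{>0})\}$, and $Dr^{+}(\mathcal{M})$ consists of the vectors $w$ with $\text{supp}(w) \subseteq \mathcal{B}$ satisfying the sign-coherent (positive) tropical three-term Pl\"ucker relations obtained from the local Dressian quadrics.

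The inclusion $\text{Trop}^{+}\Pi_{\mathcal{M}} \subseteq Dr^{+}(\mathcal{M})$ is essentially formal. Take $w = \text{val}(p_{\bullet}(\overline{x}))$ with $\overline{x} \in \Pi_{\mathcal{M}}^{\circ}(\mathbb{K}_{>0})$. Since $p_{B}$ vanishes identically on $\Pi_{\mathcal{M}}$ for every $B \notin \mathcal{B}$, we have $w_{B} = \infty$ there, so $\text{supp}(w) \subseteq \mathcal{B}$, matching the convention in the definition of the local Dressian. Since $\overline{x}$ satisfies every quadratic Pl\"ucker relation, in particular each three-term relation $p_{Sik}p_{Sjl} = p_{Sij}p_{Skl} + p_{Sil}p_{Sjk}$, and all the coordinates $p_{B}(\overline{x})$ lie in $\mathbb{K}_{>0}$, comparison of leading terms forces the relevant tropical extremum to be attained by $w_{Sik}+w_{Sjl}$ together with at least one of $w_{Sij}+w_{Skl}$ and $w_{Sil}+w_{Sjk}$; this is exactly the positive tropical relation. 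These are closed conditions, hence persist under taking closures, giving $w \in Dr^{+}(\mathcal{M})$.

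For the reverse inclusion $Dr^{+}(\mathcal{M}) \subseteq \text{Trop}^{+}\Pi_{\mathcal{M}}$ I would use the plabic-graph parametrization of the positroid cell. Fix a reduced plabic graph $G$ with edge set $E$ representing $\mathcal{M}$; Postnikov's boundary measurement map is a surjection $(\mathbb{R}_{>0})^{E} \twoheadrightarrow \Pi_{\mathcal{M}}^{>0}$ under which, after a suitable gauge fixing, each Pl\"ucker coordinate $p_{B}$ becomes a polynomial with positive coefficients in the edge weights. The same formulas over $\mathbb{K}$ give a map $(\mathbb{K}_{>0})^{E} \to \Pi_{\mathcal{M}}(\mathbb{K}_{>0})$, and since tropicalization of a split torus is surjective onto its cocharacter space, $\text{Trop}^{+}\Pi_{\mathcal{M}}$ coincides with the image of the tropicalized map $\mathbb{R}^{E} \to (\mathbb{R}\cup\{\infty\})^{\binom{[n]}{k}}$, $\tau \mapsto (\text{trop}(p_{B})(\tau))_{B}$. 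So it suffices to show this tropical boundary-measurement map surjects onto $Dr^{+}(\mathcal{M})$. I would prove this by induction on $n$: deleting or contracting an element of $\mathcal{M}$ corresponds, on the Dressian side, to restricting $w$ to a coordinate subspace (yielding a point of the positive Dressian of a smaller positroid) and, on the plabic-graph side, to a local move (removing a boundary edge, or a leaf/lollipop), with the zero- and one-dimensional cells as base cases, where $Dr^{+}(\mathcal{M})$ is a single point realized by the trivial weighting. A more geometric alternative is to invoke the positive analogue of the Dressian dictionary: a point of $Dr^{+}(\mathcal{M})$ induces a regular subdivision of the positroid polytope $\mathcal{P}_{\mathcal{M}}$ into positroid polytopes (this is precisely where the sign-coherent three-term relations are needed, in the spirit of Theorem 3.9 of \cite{speyer2021positive}), which one then lifts cell by cell to a Puiseux family in $\Pi_{\mathcal{M}}$, the lifts being compatible along shared faces because those faces again carry positroid structures.

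The main obstacle is exactly this realizability step. The tropicalized boundary-measurement map is far from injective, and it is not a priori clear that its image is all of $Dr^{+}(\mathcal{M})$ rather than a proper closed subset; pinning this down requires either an explicit construction of $\tau$ from $w$ — for instance reading the tropical edge weights off the Grassmann necklace and the target of the induced positroidal subdivision — or an induction that simultaneously controls the subdivision of $\mathcal{P}_{\mathcal{M}}$ and the combinatorics of $G$ under deletion--contraction. Once surjectivity is secured, the remaining pieces (the easy inclusion, the torus-tropicalization reduction, and the bookkeeping of supports) are routine.
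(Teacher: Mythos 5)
This statement is not proved in the paper at all: it is quoted verbatim as Theorem 9.2 of \cite{arkani2021positive}, and the paper's only justification is that citation (the result is then used as an ingredient in Theorem \ref{thm:pos_Dress_equiv}). So there is no internal argument to compare yours against; the relevant comparison is with the proof in \cite{arkani2021positive} (and the $Gr(k,n)$ case in \cite{speyer2021positive}).

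As a standalone proof, your proposal has a genuine gap, and you name it yourself: the inclusion $Dr^{+}(\mathcal{M}) \subseteq \text{Trop}^{+}\Pi_{\mathcal{M}}$, i.e.\ the realizability over the positive Puiseux points of every vector satisfying the positive tropical three-term relations, is the entire content of the theorem, and neither of your two suggested strategies is carried out. The deletion--contraction induction is not set up (you do not say what the inductive statement is, why restricting $w$ to the minor's coordinates stays in the positive Dressian of the minor, nor how a tropical edge-weight vector for the smaller plabic graph extends to one for $G$ hitting the prescribed $w$), and the alternative route via positroidal subdivisions defers the hard point to ``lifts being compatible along shared faces,'' which is itself a nontrivial gluing problem that you do not address. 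Surjectivity of the tropicalized boundary-measurement map onto $Dr^{+}(\mathcal{M})$ is exactly what \cite{arkani2021positive} and \cite{speyer2021positive} prove, using the explicit positive parametrizations of positroid cells (bridge/plabic-graph coordinates, in which Pl\"ucker coordinates are subtraction-free expressions) together with the three-term relations to solve for the parameters; until you execute one of these constructions, your argument establishes only the easy inclusion $\text{Trop}^{+}\Pi_{\mathcal{M}} \subseteq Dr^{+}(\mathcal{M})$, which is fine as written (the leading-term comparison in $\mathbb{K}_{>0}$ does give the sign-coherent tropical relation). One further small point to watch: the definition of $\text{Trop}^{+}\Pi_{\mathcal{M}}$ used in \cite{arkani2021positive} (and in Equation \ref{eq:dress_real} of this paper) is $\text{val}(\Tilde{\Pi}_{\mathcal{M}}(\mathcal{R}_{\geq 0}))$ over generalized Puiseux series, with no closure taken, so your closure-based definition needs to be reconciled with that before the two inclusions add up to the stated equality.
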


Theorem \ref{thm:Dress_pos} collects all previously known equivalent notions \cite{speyer2021positive, m=2amplut, arkani2021positive} of a point residing in the positive Dressian and captures the essence of Theorem \ref{thm:Dressian},

\begin{theorem}
Let $\mu$ be a point in $Dr^{+}(d,n)$. Then the following are equivalent

\begin{enumerate}
    \item $\mu$ is a realizable valuated matroid with the underlying positroid $\mathcal{U}_{d,n}$.
    \item $\mu$ defines a realizable $M-convex$ function.
    \item $\mu$ satisfies the positive tropical three-term Pl\"ucker relations.
    \item $\mu$ as a weight vector induces a regular positroidal subdivision of $\Delta(d,n)$.
    \item $\mu = \text{trop}(f_{t})$, where $f_{t}$ is a Lorentzial polynomial defined on the bases set $\mathcal{B}$ of the positroid $\mathcal{U}_{d,n}$.
\end{enumerate}
\end{theorem}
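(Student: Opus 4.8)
The plan is to prove this theorem exactly as Theorem \ref{thm:Dressian} and Theorem \ref{thm:Dressian_local} are proved, namely by assembling the positive analogues of three established pictures — the subdivision picture, the realizability picture, and the Lorentzian picture — and bridging them through one anchor condition. I would take (iii) as that anchor: by construction $Dr^{+}(d,n)$ is cut out from $Dr(d,n)$ by imposing the three-term Pl\"ucker relations together with the sign constraints of \cite{speyer2005tropical}, that is, the positive tropical three-term Pl\"ucker relations, so $\mu \in Dr^{+}(d,n)$ holds if and only if (iii) holds. This makes (iii) tautological and reduces the theorem to showing that (i), (ii), (iv) and (v) are each equivalent to it.

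For (iii) $\Leftrightarrow$ (iv) I would invoke the structural description of the positive Dressian in \cite[Theorem 3.9]{speyer2021positive} together with \cite{m=2amplut, arkani2021positive}: a real weight vector on the vertices of $\Delta(d,n)$ satisfies the positive tropical three-term Pl\"ucker relations precisely when the regular subdivision of $\Delta(d,n)$ that it induces has every cell equal to the matroid polytope of a positroid, i.e. is a regular positroidal subdivision. This is the positive refinement of the matroid-subdivision equivalence (iii) $\Leftrightarrow$ (iv) of Theorem \ref{thm:Dressian}, so no argument is needed beyond citing it and matching conventions.

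For (iii) $\Leftrightarrow$ (i) $\Leftrightarrow$ (ii) the key input is the identity $Dr^{+}(d,n) = \text{Trop}^{+} Gr(d,n)$ of \cite[Theorem 3.9]{speyer2021positive}, together with its local avatar \cite[Theorem 9.2]{arkani2021positive} recalled above as Theorem \ref{thm:posDres_equal_postropGrass}. By definition the right-hand side consists of the tropicalizations of points of $Gr(d,n)$ defined over the Puiseux field $\mathbb{K}$ whose Pl\"ucker coordinates are all nonnegative; any such tropicalization is a realizable valuated matroid, and by the observation recalled after Corollary \ref{cor:real_stable} — now read over $\mathbb{K}$ — the underlying matroid of a totally nonnegative point is a positroid, consistent on the relative interior of the cell with the generic underlying matroid $\mathcal{U}_{d,n}$. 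Condition (ii) is then just (i) expressed through the $M$-convex function carried by a valuated matroid, the dictionary between the two being item (ii) of Theorem \ref{thm:Dressian_local}, with the word ``realizable'' transported verbatim.

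Finally, for (i)/(ii) $\Leftrightarrow$ (v) I would use Theorem \ref{thm:positroid_stable} of the present paper. If $\mu$ is realized by a totally nonnegative $V \in Gr(d,n)$ over $\mathbb{K}$, then $f = \sum_{B \in \mathcal{B}} p_{B}\,x^{B}$ built from the Pl\"ucker coordinates of $V$ is stable over $\mathbb{K}$ by Corollary \ref{cor:real_stable}, hence Lorentzian, and $\text{trop}(f) = \mu$ by construction, giving (i) $\Rightarrow$ (v); conversely a Lorentzian polynomial supported on the bases of $\mathcal{U}_{d,n}$ tropicalizes to an $M$-convex function by Theorem \ref{thm:lorent} (equivalently item (v) of Theorem \ref{thm:Dressian_local}), and membership of $\mu$ in $Dr^{+}(d,n)$ upgrades that valuated matroid to a realizable one via the Speyer--Williams identification, giving (v) $\Rightarrow$ (i). The delicate point, which I expect to require the most care, is precisely this ``realizable'' clause in (i), (ii) and (v): one must check that the purely combinatorial condition of lying in $Dr^{+}(d,n)$ genuinely forces realizability over $\mathbb{K}$ and not merely over the tropical hyperfield, and, in the reverse direction, that the Lorentzian polynomial supplied by item (v) can always be normalized to the stable, Pl\"ucker-coordinate form of Theorem \ref{thm:positroid_stable} — both of which amount to lining up the hypotheses of \cite[Theorem 3.9]{speyer2021positive} and \cite[Definition 3.17]{branden2020lorentzian} with Corollary \ref{cor:real_stable}.
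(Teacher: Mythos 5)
Your proposal is correct and follows essentially the same route the paper takes: the paper presents this uniform-matroid statement as a compilation of the known equivalences from \cite{speyer2021positive,m=2amplut,arkani2021positive}, and its proof of the positroid generalization (Theorem \ref{thm:pos_Dress_equiv}) uses the same skeleton you describe --- (i)$\iff$(ii) by definition, (ii)$\iff$(iii) via \cite[Proposition 8.2]{arkani2021positive}, realizability from the identification $Dr^{+}=\mathrm{Trop}^{+}Gr$ over the Puiseux field, and (v) from the Lorentzian/M-convex correspondence. Your handling of (iv) and your use of Theorem \ref{thm:positroid_stable} for (i)$\Rightarrow$(v) are merely more explicit versions of steps the paper leaves to the citations, so no substantive difference remains.
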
    

We now present an extension of this theorem to the case of an arbitrary positroid in Theorem \ref{thm:pos_Dress_equiv}, which mostly builds on previously known results in \cite{arkani2021positive, speyer2021positive}.

\begin{theorem}\label{thm:pos_Dress_equiv}
Let $\mathcal{M}$ be a positroid and $\mu$ be a point in $Dr^{+}(\mathcal{M})$. Then the following are equivalent

\begin{enumerate}\label{thm:Dress_pos}
    \item $\mu$ is a realizable valuated matroid with the underlying positroid $\mathcal{M}$.
    \item $\mu$ defines a $M-convex$ function on the positroid $\mathcal{M}$.
    \item $\mu$ satisfies the positive tropical three-term Pl\"ucker relations.
    \item $\mu$ as a weight vector induces a regular positroidal subdivision of $\mathcal{P}_{\mathcal{M}}$.
    \item $\mu = \text{trop}(f_{t})$, where $f_{t}$ is a Lorentzial polynomial defined on the bases set $\mathcal{B}$ of the positroid $\mathcal{M}$.
\end{enumerate}
\end{theorem}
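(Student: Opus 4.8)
The plan is to bootstrap from the local Dressian version in Theorem~\ref{thm:Dressian_local}, using as the single essential new input the identification $Dr^{+}(\mathcal{M}) = \text{Trop}^{+}\Pi_{\mathcal{M}}$ from Theorem~\ref{thm:posDres_equal_postropGrass}. Because $Dr^{+}(\mathcal{M}) \subseteq Dr(\mathcal{M})$, any $\mu$ as in the statement already satisfies all of the conditions of Theorem~\ref{thm:Dressian_local}, so the work is to check that, for $\mu \in Dr^{+}(\mathcal{M})$, the \emph{positive} refinements (i)--(v) all hold --- which then makes them pairwise equivalent. As in the uniform case stated just above, the \enquote{positive} content of (ii) and (v) should be read as realizability of the $M$-convex function, resp.\ of the Lorentzian polynomial, so that each of the five conditions becomes a genuine characterization of membership in $Dr^{+}(\mathcal{M})$.

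First I would dispose of the two conditions that are tautological or immediate. Condition (iii) is the definition of $Dr^{+}(\mathcal{M})$: the positive tropical three-term Plücker relations are exactly the relations cutting it out. Condition (i) is Theorem~\ref{thm:posDres_equal_postropGrass}: there $Dr^{+}(\mathcal{M}) = \text{Trop}^{+}\Pi_{\mathcal{M}}$, and $\text{Trop}^{+}\Pi_{\mathcal{M}}$ is by definition the closure of the tropicalization of the totally nonnegative part of the positroid cell $\Pi_{\mathcal{M}}$ over the Puiseux field $\mathbb{K}$; hence $\mu \in Dr^{+}(\mathcal{M})$ exactly when $\mu$ is the tropicalized Plücker vector of a nonnegative $\mathbb{K}$-point of $Gr(k,n)$ with underlying positroid $\mathcal{M}$ --- that is, a (positively) realizable valuated matroid, which is (i).

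Condition (v) follows from (i): the nonnegative $\mathbb{K}$-point of the previous step, with its Plücker coordinates as coefficients, is the multi-affine homogeneous stable --- hence Lorentzian --- polynomial $f_{t}$ of Theorem~\ref{thm:positroid_stable}, supported on $\mathcal{B}$, whose tropicalization is $\mu$. Condition (ii) then follows from (v) (or directly from $\mu \in Dr(\mathcal{M})$) by Theorem~\ref{thm:Dressian_local}, which identifies tropicalizations of Lorentzian polynomials on $\mathcal{B}$ with $M$-convex functions on $\mathcal{M}$, the function being realizable since it arises from the honest point of (i). Finally, condition (iv): Theorem~\ref{thm:Dressian_local} gives that $\mu$ induces a regular \emph{matroidal} subdivision of $\mathcal{P}_{\mathcal{M}}$, and the extra input is that for $\mu \in Dr^{+}(\mathcal{M})$ all maximal cells of this subdivision are positroid polytopes, so the subdivision is positroidal --- and conversely, every regular positroidal subdivision of $\mathcal{P}_{\mathcal{M}}$ is induced by a point of $Dr^{+}(\mathcal{M})$. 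This last fact is the structural result established for positroids in \cite{arkani2021positive}, and in the uniform case $\mathcal{M} = \mathcal{U}_{d,n}$ in \cite{speyer2021positive}; I would cite it directly. Collecting the implications shows (i)--(v) all hold for $\mu \in Dr^{+}(\mathcal{M})$, hence are equivalent, and --- read with the positivity refinements noted above --- each one characterizes $Dr^{+}(\mathcal{M})$.

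The main obstacle is that two of these ingredients are genuinely nontrivial: that the positive Dressian of a positroid equals its positive tropical positroidal cell, and that positive tropical Plücker vectors induce positroidal (not merely matroidal) subdivisions of $\mathcal{P}_{\mathcal{M}}$. These are precisely the theorems of \cite{arkani2021positive, speyer2021positive} --- the former subsumed in Theorem~\ref{thm:posDres_equal_postropGrass} --- which we are entitled to assume; granting them, everything else is a routine specialization of Theorem~\ref{thm:Dressian_local}, the only real care being to propagate the positivity conditions consistently so that each \enquote{positive} adjective in (i)--(v) matches the restriction from $Dr(\mathcal{M})$ to $Dr^{+}(\mathcal{M})$.
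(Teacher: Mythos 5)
Your proposal is correct and follows essentially the same route as the paper: realizability in (i) comes from the identification $Dr^{+}(\mathcal{M}) = \text{Trop}^{+}\Pi_{\mathcal{M}}$ of Theorem~\ref{thm:posDres_equal_postropGrass}, while the remaining conditions are inherited from the inclusion $Dr^{+}(\mathcal{M}) \subseteq Dr(\mathcal{M})$ together with the local Dressian equivalences and the positivity results of \cite{arkani2021positive, speyer2021positive}. If anything, your write-up is more complete than the paper's own proof, which cites \cite[Proposition 8.2]{arkani2021positive} for (ii) $\iff$ (iii) and \cite[Theorem 9.2]{arkani2021positive} plus \cite[Remark 3.1.7]{brandt2021tropical} for realizability but never explicitly addresses (iv) and dismisses (v) as straightforward, whereas you supply the positroidal-subdivision input for (iv) with a citation and make (v) concrete by producing the stable (hence Lorentzian) polynomial of Theorem~\ref{thm:positroid_stable} from the nonnegative $\mathbb{K}$-point furnished by (i).
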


\begin{proof}
The equivalence between (i) $\iff$ (ii) is by definition. The equivalences for (ii) $\iff$ (iii) are established in \cite[Proposition 8.2]{arkani2021positive}. We concentrate on proving the equivalence of (i). We first concentrate on proving the equivalence (i) $\iff$ (iii). We already know that a point $\mu \in Dr^{+}(\mathcal{M}) \implies \mu \in Dr(\mathcal{M})$, hence the implication that $\mu$ defines a valuated matroid follows from the inclusion in the Dressian $Dr(\mathcal{M})$, the only nontrivial part being the realizability of $\mu$. The stratification of the tropical positive Grasmmanian for a positroid is discussed in \cite{arkani2021positive} and it is defined in terms of positroidal cells as 

\begin{equation}\label{eq:dress_real}
\text{Trop}^{+} \Pi_{\mathcal{M}}  = \text{val}(\Tilde{\Pi}_{\mathcal{M}}(\mathcal{R}_{\geq 0})) = Dr^{+}(\mathcal{M})    
\end{equation}

where $\mathcal{R}_{\geq 0}$ represents the positive part of the generalized Puiseux series and $val()$ represents the \emph{valuaton} map. The equality in \ref{eq:dress_real} implies that all positive Pl\"ucker vectors are realizable therefore by \cite[Theorem 9.2]{arkani2021positive}, \cite[Remark 3.1.7]{brandt2021tropical} implies that the valuated matroid $\mu$ on the positroid $\mathcal{M}$ is realizable. So (i) $\iff$ (iii). The implication in (v) is straightforward with the inclusion of $\mu \in Dr(\mathcal{M})$ restricted to a positroid. 

\end{proof}

We also look at a counterpart of the Dressian, defined in the case of flag matroids, the \emph{Flag Dressian} introduced in \cite{brandt2021tropical} $FlDr(M)$, where $M := (M_{1}, \hdots M_{n})$ is a flag matroid, and it is the tropical variety cut out by the incidence-Pl\"ucker relations and the Grassmann-Pl\"ucker relations. A classification result similar to Theorem \ref{thm:Dressian} is proven in \cite{brandt2021tropical}

\begin{theorem}[Theorem A \cite{brandt2021tropical}]\label{thm:FlagDressian_equ}
Let $\mu = (\mu_{1} , \hdots, \mu_{k})$ be a sequence of valuated matroids such that its sequence of underlying matroids $M = (M_{1}, \hdots, M_{k})$ is a ﬂag matroid. Then the following are equivalent:

\begin{enumerate}
    \item $\mu$ is a point on $FlDr(M)$, i.e. it satisﬁes tropical incidence-Pl\"ucker relations and Grassmann-Pl\"ucker relations,
    \item $\mu$ is a valuated ﬂag matroid with underlying ﬂag matroid $M$.
    \item $\mu$ induces a subdivision of the base polytope of $M$ into base polytopes of ﬂag matroids,
    \item the projective tropical linear spaces $\overline{trop(\mu_{i})}$ form a ﬂag $\overline{trop(\mu_{1})} \subseteq \hdots \subseteq \overline{trop(\mu_{k})}$
\end{enumerate}
\end{theorem}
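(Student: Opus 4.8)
The plan is to establish the four equivalences by upgrading, one step at a time, the classical chain of equivalences for the local Dressian recorded in Theorem~\ref{thm:Dressian_local}, so that each statement keeps track of the whole flag rather than a single constituent matroid. Two features of the setup are used throughout. First, the hypothesis already supplies that each $\mu_{i}$ is a valuated matroid with underlying matroid $M_{i}$, so by Theorem~\ref{thm:Dressian_local} each $\mu_{i}$ satisfies the tropical three-term Grassmann--Pl\"ucker relations and induces a regular matroidal subdivision $\Sigma_{i}$ of the matroid polytope $\cP_{M_{i}}$; the entire content of the theorem therefore concerns the \emph{compatibility} of consecutive $\mu_{i}$'s. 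Second, since the incidence--Pl\"ucker relations, the defining axioms of a valuated flag matroid, the notion of a flag of tropical linear spaces, and the notion of a flag matroid itself are all conditions imposed on \emph{consecutive} pairs $(M_{i}, M_{i+1})$, one may throughout reduce to the case $k = 2$ of a two-step flag and then reassemble.

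For (i) $\Leftrightarrow$ (ii): after the reduction to a two-step flag, what must be shown is that the tropical incidence--Pl\"ucker relations relating $\mu_{1}$ and $\mu_{2}$ are equivalent to the symmetric-exchange axiom defining a valuated flag matroid $(\mu_{1}, \mu_{2})$. I see two routes. The direct route is to prove the valuated analogue of the classical fact that $M_{1}$ is a matroid quotient of $M_{2}$ if and only if the concatenated Pl\"ucker data satisfies the incidence relations; this is the delicate combinatorial core, carried out in the spirit of the Dress--Wenzel/Murota treatment of valuated matroids, where one argues from the three- (or four-) term relations to the full exchange property by induction on the $\ell_{1}$-distance of the exponent vectors involved. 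The more economical route is to encode the two-step valuated flag as a \emph{single} valuated matroid $\nu$ on an enlarged ground set by means of a tropical version of the Higgs-lift / principal-extension construction, so that the incidence relations for $(\mu_{1}, \mu_{2})$ appear as a subfamily of the Grassmann--Pl\"ucker relations for $\nu$; the equivalence then follows by applying Theorem~\ref{thm:Dressian_local} to $\nu$ and unwinding the correspondence. Either way, this is the step I expect to be the main obstacle, both because it is where genuinely new combinatorics enters and because one must check that no information about the flag is lost in the passage to consecutive pairs or to the enlarged matroid.

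For (ii) $\Leftrightarrow$ (iii): by Theorem~\ref{thm:Dressian_local}(iv), $\mu_{i}$ induces the regular matroidal subdivision $\Sigma_{i}$ of $\cP_{M_{i}}$; the base polytope $\cP_{M}$ of the flag matroid $M$ is the Minkowski sum $\cP_{M_{1}} + \cdots + \cP_{M_{k}}$, and the subdivision induced by the concatenated weight vector $\mu$ is the common refinement $\Sigma_{1} \wedge \cdots \wedge \Sigma_{k}$ transported along this Minkowski sum. One then shows that every cell of this common refinement is a flag matroid polytope exactly when $(\mu_{1}, \dots, \mu_{k})$ is a valuated flag matroid. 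The key ingredient is the polyhedral characterization of flag matroid polytopes among generalized permutohedra --- a Minkowski decomposition into matroid polytopes is a flag matroid polytope precisely when the matroids form a quotient chain (Borovik--Gelfand--White) --- applied fibrewise over the cells of the common refinement, together with the observation from the previous step that the valuated-flag axiom is exactly what forces the quotient condition on each cell.

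For (iii) $\Leftrightarrow$ (iv): the projective tropical linear space $\overline{\text{trop}(\mu_{i})}$ is, by Speyer's dictionary \cite{speyer2008tropical, maclagan2021introduction}, the polyhedral complex dual to the matroidal subdivision $\Sigma_{i}$ of $\cP_{M_{i}}$. Nestedness $\overline{\text{trop}(\mu_{1})} \subseteq \cdots \subseteq \overline{\text{trop}(\mu_{k})}$ can be tested cell by cell, and under this duality it translates precisely into the statement that each cell of the common refinement $\Sigma_{1} \wedge \cdots \wedge \Sigma_{k}$ is a flag matroid polytope, that is, into (iii): the face of $\cP_{M_{i}}$ selected at a point $w$ is governed by which cell of $\Sigma_{i}$ contains $w$ in its normal picture, and containment of the tropical linear spaces says exactly that the selected faces are consistent across $i$. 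Thus once (i) $\Leftrightarrow$ (ii) is secured, the steps (ii) $\Leftrightarrow$ (iii) and (iii) $\Leftrightarrow$ (iv) are faithful polyhedral translations of the corresponding steps in the proof of Theorem~\ref{thm:Dressian_local}.
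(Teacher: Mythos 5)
First, a point of comparison: the paper you are writing against does not prove this statement at all --- it is quoted verbatim as Theorem A of \cite{brandt2021tropical}, so the only proof to compare with is the one in that reference. Your outline does echo its broad architecture (reduce to consecutive constituents, identify the tropical incidence relations with the valuated-quotient condition, then translate everything polyhedrally/dually), but as a proof it has a genuine hole exactly where the theorem lives: the equivalence (i) $\Leftrightarrow$ (ii). You offer two routes and prove neither. The ``direct route'' is explicitly deferred (``the delicate combinatorial core, carried out in the spirit of Dress--Wenzel/Murota''), and the ``economical route'' via a tropical Higgs-lift/principal-extension is only asserted: you neither construct the lifted valuated matroid $\nu$ nor verify that the incidence--Pl\"ucker relations for $(\mu_{1},\mu_{2})$ really occur among the Grassmann--Pl\"ucker relations of $\nu$ with nothing lost --- and that verification is essentially the statement to be proved. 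Since (ii) $\Leftrightarrow$ (iii) and (iii) $\Leftrightarrow$ (iv) in your plan both lean on ``the valuated-flag axiom is exactly what forces the quotient condition on each cell,'' i.e.\ on the same unproved core, the argument does not close.

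There are also secondary points you would need to repair. The blanket reduction to $k=2$ needs justification: $FlDr(M)$ imposes incidence relations for all pairs $i<j$, so you must show that the consecutive relations imply the rest (equivalently, that valuated quotients compose); for (iv) this is harmless by transitivity of containment, but for (i) and (iii) it is a real step. In (ii) $\Leftrightarrow$ (iii), the regular subdivision of $\mathcal{P}_{M_{1}}+\cdots+\mathcal{P}_{M_{k}}$ induced by the concatenated weight is not ``the common refinement $\Sigma_{1}\wedge\cdots\wedge\Sigma_{k}$ transported along the Minkowski sum''; it is the coherent mixed subdivision whose cells are Minkowski sums of cells of the $\Sigma_{i}$ selected by a common linear functional (it is the \emph{dual} complexes whose common refinement appears), and the assertion that such a cell is a flag matroid polytope precisely when the local initial matroids form a quotient chain is content to be proved, not a bookkeeping remark. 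Finally, in (iii) $\Leftrightarrow$ (iv) the statement concerns the \emph{projective} tropical linear spaces $\overline{\mathrm{trop}(\mu_{i})}$, i.e.\ closures in tropical projective space; treating them only as dual complexes in $\mathbb{R}^{n}/\mathbb{R}\mathbf{1}$ discards the boundary strata, and the containment statement is sensitive to exactly that. So the polyhedral translations are the right idea, but as written they are sketches resting on the missing equivalence rather than independent proofs.
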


Our first move concerning the Flag Dressian is establishing the equivalence between points in the Flag Dressian and a sequence of Lorentzian polynomials.

\begin{theorem}
Let $\mu = (\mu_{1}, \hdots, \mu_{k})$ be a sequence of valuated matroids such that its sequence of underlying matroids $M = (M_{1}, \hdots, M_{k})$ is a ﬂag matroid. Then the following are equivalent 

\begin{enumerate}
    \item $\mu$ is a point on $FlDr(M)$,
    \item there exist a sequence of Lorentzian polynomials $f_{1}, \hdots f_{k}$ such that $\mu$ = $(\text{trop}(f_{1}) = \mu_{1}, \hdots, \text{trop}(f_{k})= \mu_{k})$, where $(\mu_{1}, \hdots \mu_{k})$ forms a valuated flag matroid and $f_{i}$ is a Lorentzial polynomial defined on the bases set $B_{i}$ of $M_{i}$.
\end{enumerate}
\end{theorem}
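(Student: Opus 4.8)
The plan is to deduce this equivalence from the already-established machinery by working component-wise and then gluing via the flag/incidence structure. First I would recall from Theorem~\ref{thm:FlagDressian_equ} that $\mu$ being a point on $FlDr(M)$ is equivalent to $\mu$ being a valuated flag matroid whose constituents $\mu_i$ are valuated matroids with underlying matroids $M_i$; in particular each $\mu_i$ lies in the local Dressian $Dr(M_i)$. Then I would invoke the equivalence (i) $\iff$ (v) of Theorem~\ref{thm:Dressian_local} (i.e.\ \cite[Theorem 3.20]{branden2020lorentzian}): for each $i$, $\mu_i \in Dr(M_i)$ if and only if there is a Lorentzian polynomial $f_i \in \mathbb{K}[w_1,\dots,w_n]$ with $\mathrm{trop}(f_i) = \mu_i$ and $\mathrm{supp}(f_i) = \mathcal{B}_i$, the bases of $M_i$. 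This immediately gives the implication (i) $\implies$ (ii): from a point on $FlDr(M)$ we extract the valuated matroids $\mu_i$, apply Theorem~\ref{thm:Dressian_local} to each, and collect the resulting $f_1,\dots,f_k$; the fact that $(\mu_1,\dots,\mu_k)$ forms a valuated flag matroid is exactly the content of Theorem~\ref{thm:FlagDressian_equ}(ii).

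For the reverse implication (ii) $\implies$ (i), suppose we are given Lorentzian polynomials $f_1,\dots,f_k$ with $\mathrm{trop}(f_i) = \mu_i$ and with $(\mu_1,\dots,\mu_k)$ a valuated flag matroid. Applying Theorem~\ref{thm:Dressian_local} in the other direction, each $\mu_i$ is a point in $Dr(M_i)$, hence satisfies the tropical three-term Grassmann–Pl\"ucker relations. It then remains to check that the tropical incidence-Pl\"ucker relations between consecutive $\mu_i$ and $\mu_{i+1}$ hold; but this is guaranteed precisely because we assumed $(\mu_1,\dots,\mu_k)$ is a valuated flag matroid, which by the equivalence (i) $\iff$ (ii) of Theorem~\ref{thm:FlagDressian_equ} is the same as being a point on $FlDr(M)$. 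So both implications reduce to combining Theorem~\ref{thm:FlagDressian_equ} with the component-wise statement Theorem~\ref{thm:Dressian_local}.

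The main subtlety — and the step I expect to require the most care — is the interplay between the \emph{support/normalization} conditions. Theorem~\ref{thm:Dressian_local}(v) produces a Lorentzian polynomial supported on $\mathcal{B}_i$, but a priori nothing forces the coefficients of the $f_i$ for different $i$ to be compatible in a way that reflects the incidence relations; the incidence-Pl\"ucker relations are statements about the \emph{valuations} $\mu_i$, not about the polynomials themselves, so one must be careful that "$\mathrm{trop}(f_i) = \mu_i$ for all $i$ together with $(\mu_1,\dots,\mu_k)$ a valuated flag matroid" is genuinely the hypothesis (as stated) rather than some stronger joint condition on the $f_i$. With the statement phrased as it is, this is not actually an obstacle: the flag compatibility is imposed at the level of the $\mu_i$, and the polynomials only need to individually tropicalize correctly. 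I would therefore emphasize in the write-up that the equivalence is essentially a "product" of Theorem~\ref{thm:FlagDressian_equ} and Theorem~\ref{thm:Dressian_local}, and spell out explicitly that no extra argument is needed to pass between the valuated-flag-matroid condition and the existence of the sequence $(f_i)$ beyond applying the two cited theorems coordinate-wise.

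\begin{proof}
By Theorem~\ref{thm:FlagDressian_equ}, $\mu = (\mu_1,\dots,\mu_k)$ is a point on $FlDr(M)$ if and only if it is a valuated flag matroid with underlying flag matroid $M$; in particular, in that case each $\mu_i$ is a valuated matroid with underlying matroid $M_i$, hence a point in the local Dressian $Dr(M_i)$.

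Assume (i). Then each $\mu_i \in Dr(M_i)$, so by the equivalence (i) $\iff$ (v) of Theorem~\ref{thm:Dressian_local} there is a Lorentzian polynomial $f_i \in \mathbb{K}[w_1,\dots,w_n]$ defined on the bases set $B_i$ of $M_i$ with $\mathrm{trop}(f_i) = \mu_i$. Collecting these, the sequence $f_1,\dots,f_k$ satisfies $\mu = (\mathrm{trop}(f_1),\dots,\mathrm{trop}(f_k))$, and $(\mu_1,\dots,\mu_k)$ forms a valuated flag matroid by Theorem~\ref{thm:FlagDressian_equ}. This proves (ii).

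Conversely, assume (ii). By the equivalence (i) $\iff$ (v) of Theorem~\ref{thm:Dressian_local} applied to each index $i$, the condition $\mathrm{trop}(f_i) = \mu_i$ with $f_i$ Lorentzian on $B_i$ forces $\mu_i \in Dr(M_i)$, so each $\mu_i$ is a valuated matroid with underlying matroid $M_i$. Together with the hypothesis that $(\mu_1,\dots,\mu_k)$ forms a valuated flag matroid, the equivalence (i) $\iff$ (ii) of Theorem~\ref{thm:FlagDressian_equ} yields that $\mu$ is a point on $FlDr(M)$, which is (i).
\end{proof}
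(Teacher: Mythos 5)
Your proposal is correct and follows essentially the same route as the paper: apply \cite[Theorem 3.20]{branden2020lorentzian} (equivalently, (i) $\iff$ (v) of Theorem~\ref{thm:Dressian_local}) component-wise to each $\mu_i$ to produce the Lorentzian polynomials, and use Theorem~\ref{thm:FlagDressian_equ} to pass between the valuated-flag-matroid condition and membership in $FlDr(M)$. Your added remark that the flag compatibility lives entirely at the level of the $\mu_i$ rather than the $f_i$ matches the paper's own caveat following the proof, so there is no substantive difference.
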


\begin{proof}
We first consider the implication (i) $\implies$ (ii). Each valuated matroid $\mu_{i}$ corresponds to a M-convex function on $M_{i}$ by definition, and by \cite[Theorem 3.20]{branden2020lorentzian} this further corresponds to a Lorentzian polynomial $f_{i}$ such that $\text{trop}(f_{i}) = \mu_{i}$. Therefore, for $\mu \in FlDr(M)$, there exists a sequence of Lorentzian polynomials $f_{1}, \hdots f_{k}$, such that $\mu$ = $(\text{trop}(f_{1}) = \mu_{1}, \hdots, \text{trop}(f_{k}) = \mu_{k})$, where $f_{i}$ is a Lorentzial polynomial defined on the bases set $B_{i}$ of $M_{i}$. We now try to prove the implication (ii) $\implies$ (i). Given a sequence of Lorentzian polynomials such that $\mu$ = $(\text{trop}(f_{1}) = \mu_{1}, \hdots, \text{trop}(f_{k})= \mu_{k})$ is a valuated flag matroid on the underlying matroid $M = (M_{1}, \hdots , M_{k})$, which is equivalent to saying that $\mu \in FlDr(M)$  by \ref{thm:FlagDressian_equ}.
\end{proof}

We acknowledge that stipulating that a sequence of flag matroids form a valuated flag matroid is a very strong condition and it might be worthwhile to explore if a refinement of the statement above is possible with weaker conditions.

There has been recent work on extending the equivalent conditions for points residing in the Flag Dressian $FlDr(M)$ to the case of the positive Flag Dressian $FlDr^{+}(M)$, specifically in the case of the \emph{complete tropical flag variety} \cite[Theorem A]{boretsky2022polyhedral, 10.1093/imrn/rnac349} and we wish to explore the relation between the positive Flag Dressian and Lorentzian polynomials in this case in future work.

Another important aspect closely related to stable polynomials that we want to highlight is their connection with \emph{hyperbolic polynomials} and \emph{positively hyperbolic} varieties, studied in detail in \cite{borcea2010multivariate} and \cite{rincon2021positively} respectively. We first recall the definitions of a \emph{hyperbolic polynomial}\cite{borcea2010multivariate, amini2018non} and \emph{positively hyperbolic} variety \cite{rincon2021positively}.

\begin{definition}
A homogeneous polynomial $h(x) \in \mathbb{R}[x_{1} , \cdots , x_{n}]$ is hyperbolic with respect to a vector $e \in \mathbb{R}^{n}$  if $h(e) \neq 0$, and if for all $x \in \mathbb{R}^{n}$ the univariate polynomial $t \rightarrow h(te - x)$ has only real zeros.    
\end{definition}

If h is a hyperbolic polynomial of degree d, then we may write

\[  h(te - x) = h(e) \prod_{j=1}^{d} (t- \lambda_{j}(x)) \]

where 

\[ \lambda_{\text{max}}(x) = \lambda_{1}(x) \geq \lambda_{2}(x)  \geq \cdots 
  \geq \lambda_{d}(x) = \lambda_{\text{min}}(x) \]

are called the \emph{eigenvalues} of $x$ (with respect to $e$). The \emph{hyperbolicity cone} of $h$ with respect to $e$ is the set $\Lambda_{+}(h, e) = \{ x \in \mathbb{R}^{n} : \lambda_{\text{min}}(x) \geq 0 \}$.

We now relate the connection between stable and hyperbolic polynomials \cite[Lemma 2.7]{amini2018non},

\begin{lemma}
Let $P \in \mathbb{R}[x_{1} , \cdots , x_{n}]$ be a homogeneous polynomial. Then $P$ is stable if and only if $P$ is hyperbolic with $\mathbb{R}^{n}_{+} \subseteq \Lambda_{+}$.    
\end{lemma}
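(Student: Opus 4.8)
The plan is to prove this classical equivalence by treating the two implications separately; the only real content is a rotation argument that converts ``nonvanishing on $\mathcal{H}^n$'' into ``real-rootedness along lines through a fixed direction.'' Throughout I write $d=\deg P$ and assume $P\not\equiv 0$ (the zero polynomial is stable by the paper's convention but is not hyperbolic, so the statement is understood for nonzero $P$, as is customary).

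For the forward implication I would fix $e\in\mathbb{R}^n_{>0}$, say $e=\mathbf 1$, and show that $P$ is hyperbolic with respect to $e$ and that $\mathbb{R}^n_+\subseteq\Lambda_+(P,e)$. First, $\mathbb{R}^n_{>0}\subseteq\mathcal{H}^n$, so stability forces $P(e)\neq 0$; hence, for every $x\in\mathbb{R}^n$, the restriction $h_x(t):=P(te-x)$ is a univariate polynomial of degree exactly $d$ with leading coefficient $P(e)$ (the coefficient of $t^d$ being $\sum_{|\alpha|=d}c_\alpha e^\alpha=P(e)$). To see that $h_x$ is real-rooted, suppose $t_0=a+bi$ with $b\neq 0$ is a root and put $\zeta:=(ae-x)+i(be)\in\mathbb{C}^n$, so that $P(\zeta)=P(t_0e-x)=0$. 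Every coordinate of $\zeta$ has imaginary part $be_j$ of one fixed nonzero sign, so multiplying $\zeta$ by $-i$ (when $b>0$) or by $i$ (when $b<0$) rotates each coordinate into the right half-plane; the resulting vector lies in $\mathcal{H}^n$, so $P$ of it is nonzero by stability, while by homogeneity it equals $(\pm i)^d P(\zeta)=0$, a contradiction. Hence $h_x$ has only real roots and $P$ is hyperbolic with respect to $e$. For the cone condition, given $x\in\mathbb{R}^n_+$ and a hypothetical negative root $t_0<0$ of $h_x$, the vector $t_0e-x$ has all coordinates strictly negative, so $x-t_0e\in\mathbb{R}^n_{>0}\subseteq\mathcal{H}^n$, yet $0\neq P(x-t_0e)=(-1)^dP(t_0e-x)=0$, which is impossible; thus $\lambda_{\min}(x)\geq 0$ and $\mathbb{R}^n_+\subseteq\Lambda_+(P,e)$.

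For the converse I would start from $P$ hyperbolic with respect to some $e$ and $\mathbb{R}^n_+\subseteq\Lambda_+(P,e)$. Since $\mathbb{R}^n_{>0}$ is open and contained in $\Lambda_+(P,e)$, it lies in $\operatorname{int}\Lambda_+(P,e)$, and by G\aa rding's theorem each such interior vector is itself a direction of hyperbolicity for $P$ with the same hyperbolicity cone. Given $z\in\mathcal{H}^n$, I would pass to $\zeta:=iz$, observing that $\zeta=x'+iy'$ with $y'=(\operatorname{Re}z_j)_j\in\mathbb{R}^n_{>0}$ and $x'=(-\operatorname{Im}z_j)_j\in\mathbb{R}^n$; taking $y'$ as the hyperbolicity direction, the univariate polynomial $s\mapsto P(x'+sy')$ has leading coefficient $P(y')\neq 0$ and only real roots (the negatives of the eigenvalues of $x'$ with respect to $y'$). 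Evaluating at $s=i$ then gives $P(\zeta)=P(x'+iy')\neq 0$, whence $P(z)=i^{-d}P(iz)\neq 0$; since $z\in\mathcal{H}^n$ was arbitrary, $P$ is stable.

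The step I expect to be the main obstacle — or rather the only ingredient that is not pure bookkeeping — is the appeal to G\aa rding's theorem in the converse, namely that interior points of a hyperbolicity cone are again hyperbolicity directions with the same cone; I would cite this rather than reprove it. The remaining delicacies are notational: keeping the paper's convention that $\mathcal{H}$ is the right half-plane (so the relevant rotations are by $\pm i$), checking leading coefficients so that the univariate restrictions genuinely have degree $d$, and excluding the trivial polynomial. If one prefers a self-contained argument, one may instead read the hypothesis ``hyperbolic with $\mathbb{R}^n_+\subseteq\Lambda_+$'' as asserting hyperbolicity with respect to every $v\in\mathbb{R}^n_{>0}$ (with $\mathbb{R}^n_+$ in the common cone), in which case G\aa rding's theorem is not needed and the converse goes through verbatim.
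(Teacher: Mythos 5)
Your proof is correct: the rotation-by-$\pm i$ argument for the forward direction and the appeal to G\aa rding's theorem (interior points of the hyperbolicity cone are again hyperbolicity directions) for the converse are exactly the standard route, and your bookkeeping with the paper's right-half-plane convention for $\mathcal{H}$, the leading coefficient $P(e)$, and the exclusion of $P\equiv 0$ is all accurate. Note that the paper itself offers no proof -- it simply cites this as Lemma 2.7 of Amini--Br\"and\'en -- so there is nothing to compare against beyond observing that your argument is essentially the one in that cited source.
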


We wish to explore these connections between hyperbolic polynomials and stable polynomials in the context of the Dressian in future work.

\begin{definition}
Let $X \subset \mathbb{C}^{n}$ be a variety which is equidimensional of codimension $c \leq n-1$. X is called \emph{positively hyperbolic} if for every linear subspace $L$ in the positive Grassmannian
$Gr^{+}(c, n)$ and every $x \in X$, the imaginary part Im(x) does not belong to $L \setminus \{0\}$. A projective variety in $\mathbb{P}^{n-1}$ is positively hyperbolic if its affine cone in $\mathbb{C}^{n}$ is.    
\end{definition}

A straightforward conclusion from \cite[Proposition 2.2]{rincon2021positively} and \cite[Theorem 1.1]{purbhoo2018total} is that the variety defined by the homogenous multi-affine polynomial in Equation \ref{eq:ply_nonnegative}, which \emph{represents} $V$ in $Gr(k,n)$, is positively hyperbolic if $V$ corresponds to a positroid.

Another aspect of our discussion is the classification of various classes of matroids on whether they are Rayleigh, strongly Rayleigh, or if they satisfy the half-plane property. Before moving on, we also briefly discuss the notion of \emph{balanced matroids} \cite{feder1992balanced} and how they are related to our discussion.

\begin{definition}
Let $\mathcal{M}$ be a matroid on the ground set $E$ with basis $\mathcal{B}$. Let $\text{Pr}(e)$ denote the probability of an element $e$ being present in basis element $B$, which is chosen uniformly at random. The matroid $\mathcal{M}(E, \mathcal{B})$ is said to satisfy the  \emph{negatively correlated} property if 

\[  \text{Pr}(ef) \leq \text{Pr}(e)\text{Pr}(f) \]

for all pair of distinct $e,f \in E$.
\end{definition}

\begin{definition}
A matroid $\mathcal{M}$ is said to be \emph{balanced} if all its minors including itself satisfy the negative correlation property.    
\end{definition}

It is clear from the definition that Rayleigh matroids are balanced. In \cite{choe2006rayleigh} the authors also provide an explicit example of a transversal matroid of rank 4 \cite[Proposition 5.9]{choe2004homogeneous} (also updated in \cite{huh2021correlation}), which is not balanced and hence is also not Rayleigh. We recall the example here. Let $\mathcal{L}$ be a matroid on the ground set $E = \{1,2, \cdots, 10, e, f\}$ and $\mathcal{B}$ denote its bases, where the bases are the four transversals to the sets $\{1, 2, 3, 4, f\} , \{5, 6, 7, f\}, \{8, 9, 10, f\}$ and $\{1, 2, 3, 5, 6, 8, 9, e, f\}$ 

Equivalently the Rayleigh condition can be restated in terms of probability distribution in the following way \cite{huh2021correlation} for $B \in \mathcal{B}$ chosen uniformly at random,

\[ \text{Pr}(i \in B, j \in B) \>\> \text{Pr}( i \not \in B, j \not \in B) \leq \text{Pr}(i \in B,  j \not \in B)  \>\> \text{Pr}(j \in B, i \not \in B)  \]

But,

\[ \text{Pr}(i \in B, j \in B) \>\> \text{Pr}( i \not \in B, j \not \in B) = 0.04355\] 

and 

\[ \text{Pr}(i \in B,  j \not \in B) \>\> \text{Pr}(j \in B, i \not \in B) = 0.04298\]

which shows that $L$ is not Rayleigh. However, we want to point out the nuance here that this still does not provide any counterexample to Conjecture \ref{conj:lorent_conj}, as we see clearly that $L$ is still $\frac{8}{7}$- Rayleigh. Therefore, one of the problems that we want to work on is 

\begin{problem}
Verify Conjecture \ref{conj:lorent_conj} for the class of transversal matroids.  
\end{problem}

In the class of transversal matroids, it is already known that lattice path matroids are Rayleigh \cite{cohen2015lattice}. Hence, we initially aim to consider families of transversal matroids that are not necessarily lattice path matroids, and try to verify if Conjecture \ref{conj:lorent_conj} holds true for them.

Also as previously mentioned the authors in \cite{choe2004homogeneous} suggested that it might be the case that a large subclass of transversal matroids satisfy the half-plane property. However, since all matroids that satisfy the half-plane property are Rayleigh and now we know that the class of transversal matroids is not Rayleigh, therefore it also does not satisfy the half-plane property. But we wish to modify the problem in the following way,

\begin{problem}
Find the largest class of transversal matroids for which it satisfies the half-plane property.
\end{problem}

Pertaining to positroids, we wish to work on the following problem

\begin{problem}\label{prob:pos_strong_ray}
 Are positroids strongly Rayleigh?   
\end{problem}

With the complete classification of matroids on at most eight elements in \cite{kummer2022matroids}, one is tempted to search for a counterexample for Problem \ref{prob:pos_strong_ray} by looking for a positroid in the list of 22 matroids in \cite[Theorem 5.13]{kummer2022matroids} which do not satisfy the half-plane property, and hence are not strongly Rayleigh. We did verify that none of these 22 matroids are positroids and hence no counterexample comes from this list. However, we want to highlight that this does not guarantee that a counterexample of Problem \ref{prob:pos_strong_ray} does not exist in the form of a matroid on eight elements, because the property of being a positroid is not invariant under matroid isomorphisms, and the classification in \cite{kummer2022matroids} relies on a listing of matroids up to isomorphisms.

\bibliographystyle{siam}
\bibliography{biblio.bib}

\end{document}